%%%%%%%%%%%%%%%%%%%%%%%%%%%%
% Packages and Other Stuff %
%%%%%%%%%%%%%%%%%%%%%%%%%%%%

\documentclass[11pt,reqno]{amsart} % font size - 11pt, right equation numbers, type - AMS Article

\usepackage{amsmath,amsthm,amssymb,mathtools,mathrsfs,amsfonts,amscd}
\usepackage{comment,fullpage}
\usepackage{braket}
\usepackage{caption}
\usepackage{times}
\usepackage[T1]{fontenc}
\usepackage{latexsym}
\usepackage[dvips]{graphics}
\usepackage{epsfig}
\input amssym.def
\input amssym.tex
\usepackage{hyperref}
\usepackage{url}
\usepackage{color}
\usepackage{breakurl}

\usepackage{tikz}
\usepackage{floatrow}

%%%%%%%%%%%%%%%%%%%%%
% Defining Commands %
%%%%%%%%%%%%%%%%%%%%%

\newcommand{\bburl}[1]{\textcolor{blue}{\url{#1}}}

\numberwithin{equation}{section}

\newtheorem{thm}{Theorem}[section]

\theoremstyle{plain}

\newtheorem{corollary}[thm]{Corollary}
\newtheorem{definition}[thm]{Definition}

\newtheorem{lemma}[thm]{Lemma}
\newtheorem{proposition}[thm]{Proposition}
\newtheorem{theorem}[thm]{Theorem}
\newtheorem{conjecture}[thm]{Conjecture}

% Blackboard Letters

  %use in linux

% Formatting

\newcommand{\hr}[1]{\href{#1}{\url{#1}}}

%%%%%%%%%%%%%%%%%%%%%%%%
% Title, Authors, Date %
%%%%%%%%%%%%%%%%%%%%%%%%

\title{The Fibonacci Quilt Game}

\author{Steven J. Miller}
\email{\textcolor{blue}{\href{mailto:sjm1@williams.edu}{sjm1@williams.edu}},  \textcolor{blue}{\href{Steven.Miller.MC.96@aya.yale.edu}{Steven.Miller.MC.96@aya.yale.edu}}}
\address{Department of Mathematics and Statistics, Williams College, Williamstown, MA 01267}

\author{Alexandra Newlon}
\email{\textcolor{blue}{\href{mailto:aknewlon@gmail.com}{aknewlon@gmail.com}}}
\address{Department of Mathematics, Colgate University, Hamilton, NY 13346}

\thanks{This work was partially supported by NSF grants DMS1659037 and DMS1561945 and by Williams College. We thank the participants of the 2019 Williams SMALL REU for helpful conversations.}

\keywords{Fibonacci, Zeckendorf's Theorem, Recurrence Relations}

\date{\today}

\begin{document}
\maketitle

\begin{abstract}
Zeckendorf \cite{Ze} proved that every positive integer can be expressed as the sum of non-consecutive Fibonacci numbers. This theorem inspired a beautiful game, the Zeckendorf Game \cite{BEFM1}. Two players begin with $n \ 1$'s and take turns applying rules inspired by the Fibonacci recurrence, $F_{n+1} = F_n + F_{n-1}$, until a decomposition without consecutive terms is reached; whoever makes the last move wins. We look at a game resulting from a generalization of the Fibonacci numbers, the Fibonacci Quilt sequence \cite{CFHMN}. These arise from the two-dimensional geometric property of tiling the plane through the Fibonacci spiral. Beginning with 1 in the center, we place integers in the squares of the spiral such that each square contains the smallest positive integer that does not have a decomposition as the sum of previous terms that do not share a wall. This sequence eventually follows two recurrence relations, allowing us to construct a variation on the Zeckendorf Game, the Fibonacci Quilt Game. While some properties of the Fibonaccis are inherited by this sequence, the nature of its recurrence leads to others, such as Zeckendorf's theorem, no longer holding; it is thus of interest to investigate the generalization of the game in this setting to see which behaviors persist. We prove, similar to the original game, that this game also always terminates in a legal decomposition, give a lower bound on game lengths, show that depending on strategies the length of the game can vary and either player could win, and give a conjecture on the length of a random game.
\end{abstract}

\tableofcontents

%%%%%%%%%%%%%%%%
% INTRODUCTION %
%%%%%%%%%%%%%%%%
%%%%%%%%%%%%%%%%%%%%%%%%%%%%%%%%%%%%%%%%%%%%%%%%%%%%%%%%%%%%%%%%%%%%%%%%%%%%%%%%%%%%%%%%%%%%%%%%%%%%%%%%%%%%%%%%%%%%%%%%%%%%%%%%%%%%%%%%%%%%%%%%%%%%%%%%%%
%%%%%%%%%%%%%%%%%%%%%%%%%%%%%%%%%%%%%%%%%%%%%%%%%%%%%%%%%%%%%%%%%%%%%%%%%%%%%%%%%%%%%%%%%%%%%%%%%%%%%%%%%%%%%%%%%%%%%%%%%%%%%%%%%%%%%%%%%%%%%%%%%%%%%%%%%%
%%%%%%%%%%%%%%%%%%%%%%%%%%%%%%%%%%%%%%%%%%%%%%%%%%%%%%%%%%%%%%%%%%%%%%%%%%%%%%%%%%%%%%%%%%%%%%%%%%%%%%%%%%%%%%%%%%%%%%%%%%%%%%%%%%%%%%%%%%%%%%%%%%%%%%%%%%
%%%%%%%%%%%%%%%%%%%%%%%%%%%%%%%%%%%%%%%%%%%%%%%%%%%%%%%%%%%%%%%%%%%%%%%%%%%%%%%%%%%%%%%%%%%%%%%%%%%%%%%%%%%%%%%%%%%%%%%%%%%%%%%%%%%%%%%%%%%%%%%%%%%%%%%%%%

\section{Introduction}\label{sec:intro}

%%%%%%%%%%%%%%%%%%%%%%
% HISTORY OF PROBLEM %
%%%%%%%%%%%%%%%%%%%%%%
%%%%%%%%%%%%%%%%%%%%%%%%%%%%%%%%%%%%%%%%%%%%%%%%%%%%%%%%%%%%%%%%%%%%%%%%%
%%%%%%%%%%%%%%%%%%%%%%%%%%%%%%%%%%%%%%%%%%%%%%%%%%%%%%%%%%%%%%%%%%%%%%%%%
\subsection{History}\label{sec:history}
The Fibonacci numbers are one of the most famous, and beautiful, sequences of all time; appearing throughout mathematics and nature \cite{Kos}. Zeckendorf \cite{Ze} proved that every positive integer has a unique representation as a sum of non-consecutive Fibonacci numbers, which are defined by $F_1 = 1, F_2 = 2$ and $F_{n+1} = F_n + F_{n-1}$; conversely, an equivalent definition of the Fibonacci numbers is the unique sequence of integers such that every positive integers can be uniquely written as a sum of non-consecutive terms. Here, we set the initial conditions $F_1 = 1$ and $F_2 = 2$ rather then $F_1 = F_2 = 1$ to preserve uniqueness. This is the first of many interplays between notions of legal decomposition and definitions of a sequence, expanded to a large class of linear recurrences (see \cite{Ho, Ke, MW1, MW2} for examples).

%%%%%%%%%%%%%%%%%%%%%%%
% The Zeckendorf Game %
%%%%%%%%%%%%%%%%%%%%%%%

\subsubsection{The Zeckendorf Game}\label{sec:zeckgame}
We can use these notions of legal decomposition to create interesting games. The first, the Zeckendorf game, was defined based on the recurrence relation of the Fibonacci sequence $\{F_n\}$. We briefly summarize the results from \cite{BEFM1, BEFM2}.

We first introduce some notation. Let $\{F_1^n\}$ denote $n$ copies of $F_1$, and in general $\{F_i^n\}$ denote $n$ copies of $F_i$; as we never raise Fibonacci numbers to a power there should be no confusion as to what is meant. For example, $\{F_1^3 \land F_4^2 \land F_5^1\}$ would be three copies of $F_1 = 1$, two copies of $F_4 = 5$, and one copy of $F_5 = 8$. For simplicity, moving forward we omit exponents of 1, so $\{F_i\} = \{F_i^1\}$.

\begin{definition}[The Two Player Zeckendorf Game]\label{def:zeckgame}
     At the beginning of the game, there is an unordered list of $n$ 1's. Let $F_1 = 1$, $F_2 =  2$, and $F_{i+1} = F_i + F_{i-1}$; therefore the initial list is $\{F_1^n\}$. On each turn, a player can do one of the following moves.
    \begin{enumerate}
        \item If the list contains two consecutive Fibonacci numbers, $F_{i-1}$ and $F_i$, then a player can remove these and replace with $F_{i+1}$. We denote this move $\{F_{i-1} \land F_i \rightarrow F_{i+1}\}$.
        \item If the list has two (or more) of the same Fibonacci number, $F_i$, then
        \begin{enumerate}
            \item if $i = 1$, a player can change two $F_1$'s to $F_2$, denoted by $\{F_1^2 \rightarrow F_2\}$,
            \item if $i = 2$, a player can change two $F_2$'s to $F_1$ and $F_3$, denoted by $\{F_2^2 \rightarrow F_1  \land  F_3\}$, and
            \item if $i$ $\geq 3$, a player can change two $F_i$'s to $F_{i-2}$ and $F_{i+1}$, denoted by $\{F_i^2 \rightarrow F_{i-2}  \land  F_{i+1}\}$.
        \end{enumerate}
    \end{enumerate}
    The players alternative moving.  The game ends when no more moves are possible, and the last person to move wins.
\end{definition}

Baird-Smith, Epstein, Flint, and Miller \cite{BEFM1, BEFM2} proved that this game always terminates in a finite number of moves in the Zeckendorf decomposition of $n$, and then bounded the game length. One of the key ingredients in their proof is that there is no decomposition involving sums of Fibonacci numbers with fewer summands than the Zeckendorf decomposition; this is proved using a monovariant related to the number and indices of each term, and has been generalized to many other sequences \cite{CHHMPT}.

\begin{theorem}\label{thm:zeckterm}
The shortest game reaches the Zeckendorf decomposition in $n - Z(n)$ moves, where $Z(n)$ is the number of terms in the Zeckendorf decomposition of $n$. The longest game is bounded by $i \times n$, where $i$ is the index of the largest Fibonacci number less than or equal to $n$.
\end{theorem}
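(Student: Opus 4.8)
The plan is to handle the three assertions --- that the game always ends in the Zeckendorf decomposition, that the minimal length is $n-Z(n)$, and that every game has length at most $i\cdot n$ --- by a mix of monovariants and one explicit construction. Throughout, record a position as the multiset of indices of the Fibonacci numbers currently in the list; the recurrence $F_{i+1}=F_i+F_{i-1}$ (together with $F_2=2=F_1+F_1$ and $F_3=F_2+F_1$) shows that the value $\sum_k F_{i_k}$ is unchanged by every move, so it stays equal to $n$.

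First I would prove termination and that the terminal position is the Zeckendorf decomposition. For termination, use the monovariant $S:=\sum_k F_{i_k}^2$: writing each move out and using $F_{i+1}^2=F_i^2+2F_iF_{i-1}+F_{i-1}^2$ and $F_{i-2}^2+F_{i+1}^2=2F_i^2+2F_{i-1}^2$, one checks that every one of the five move-types strictly increases $S$. Since $0<S\le\big(\sum_k F_{i_k}\big)^2=n^2$ and $S\in\Z$, the game halts after at most $n^2$ moves. At a position where no move is legal there can be neither a repeated index (move~2 would apply) nor two consecutive indices (move~1 would apply); hence the terminal list is a sum of distinct, non-consecutive Fibonacci numbers equal to $n$, so by the uniqueness half of Zeckendorf's theorem it is exactly the Zeckendorf decomposition of $n$. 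This already shows every game reaches the Zeckendorf decomposition.

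For the lower bound $n-Z(n)$ on game length, track the number of summands: it is $n$ initially and $Z(n)$ at the end, and each move changes it by $-1$ (types~1 and~2a) or $0$ (types~2b and~2c), so at least $n-Z(n)$ moves are needed. For the matching upper bound I would exhibit a game of that length by reversing a ``shattering'': starting from the Zeckendorf decomposition, repeatedly replace some term $F_k$ by $\{F_{k-1},F_{k-2}\}$ if $k\ge 3$ and by $\{F_1,F_1\}$ if $k=2$, never splitting an $F_1$. Each such step preserves the value $n$, raises the summand count by $1$, and strictly decreases $S$, so the process terminates; it can only terminate at $\{F_1^n\}$, so it uses exactly $n-Z(n)$ steps, and each step is the reverse of a legal move (type~1 when $k\ge 3$, type~2a when $k=2$). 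Read backwards, this is a legal game of length $n-Z(n)$ from $\{F_1^n\}$ to the Zeckendorf decomposition; with the lower bound, the shortest game has exactly $n-Z(n)$ moves.

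Finally, for the bound $i\cdot n$ on the longest game I would build a single monovariant that strictly decreases on every move. Let $w(j)=j$ for $j\ne 2$ and $w(2)=\tfrac52$, and set $\Phi:=(\text{number of summands})+\sum_k w(i_k)$. A case check of the five move-types --- the delicate one being move~2b, $\{F_2^2\to F_1\land F_3\}$, which preserves both the summand count and the index sum and is defeated precisely because $w$ is strictly concave at $j=2$, so that $w(1)+w(3)-2w(2)=-1$ --- shows each move decreases $\Phi$ by at least $\tfrac12$. Since $\Phi$ starts at $2n$ and is always at least $2$, every game has at most $4n-4$ moves; as $4n-4<i\cdot n$ once $i\ge 4$ (that is, $n\ge 5$) and the finitely many cases $n\le 4$ are checked directly, every game has length at most $i\cdot n$, as claimed. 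The main obstacles are the two design problems --- choosing the potential $S$ (and, for the length bound, the concave weight $w$) so that all five move-types cooperate, and producing the explicit shortest game via the shattering construction; the remaining verifications are routine bookkeeping.
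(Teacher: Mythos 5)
Your proposal is correct, but it takes a genuinely different route from the source it would be compared against: this paper does not prove Theorem \ref{thm:zeckterm} at all --- it quotes it from the Zeckendorf game papers \cite{BEFM1, BEFM2}, noting that a key ingredient there is the summand minimality of the Zeckendorf decomposition, established via a monovariant built from the number and the indices of the summands (the same philosophy the present paper adapts, with a sum-of-square-roots-of-indices monovariant, for the Fibonacci Quilt game). You instead give a self-contained argument: the potential $S=\sum_k F_{i_k}^2$ strictly increases under all five move types (your identities $F_{i+1}^2-F_i^2-F_{i-1}^2=2F_iF_{i-1}$ and $F_{i-2}^2+F_{i+1}^2=2F_i^2+2F_{i-1}^2$ check out), and since $S\le n^2$ the game terminates, necessarily at a duplicate-free, nonconsecutive decomposition, i.e.\ the Zeckendorf decomposition by uniqueness; the lower bound $n-Z(n)$ follows from summand counting without any appeal to summand minimality, since every game is already known to end with exactly $Z(n)$ summands; the reversed-shattering construction is a legitimate way to exhibit a game of exactly $n-Z(n)$ moves (each reversed split is a legal type-1 or type-2a move because its two pieces are present at that stage); and your weighted potential $\Phi$ with $w(2)=\tfrac52$ indeed drops by at least $\tfrac12$ per move (the tight cases being $\{F_1^2\to F_2\}$ and $\{F_4^2\to F_2\land F_5\}$), giving the bound $4n-4$, which is stronger than $i\times n$ once $n\ge5$ and trivially consistent for $n\le4$. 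The trade-off: the cited index-based monovariant/summand-minimality machinery is what generalizes to other recurrences (including the Quilt game in this paper), whereas your Fibonacci-specific potentials are less portable but buy a sharper, linear upper bound and a shorter, fully elementary proof of the exact shortest-game length.
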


Since there is a large range between the lower and upper bounds, they also conjectured on the length of a random game.

\begin{conjecture}\label{con:zeckgauss}
As $n$ goes  to  infinity,  the  number  of  moves  in  a  random game, when all legal moves are equally likely, converges to a Gaussian.
\end{conjecture}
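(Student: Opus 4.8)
The plan is to first separate the deterministic part of the game length from the random part. Because the total value $n$ is conserved throughout, and the only moves that change the number of summands are the two ``combining'' moves --- type~1, $\{F_{i-1}\land F_i\to F_{i+1}\}$, and type~2a, $\{F_1^2\to F_2\}$, each of which lowers the number of summands by exactly one, while the two ``splitting'' moves (types~2b and 2c, which replace a repeated pair by a smaller and a larger term) leave the summand count fixed --- every play of the game from the starting list $\{F_1^n\}$ to the unique terminal state, the Zeckendorf decomposition of $n$ guaranteed by Theorem~\ref{thm:zeckterm}, uses precisely $n-Z(n)$ combining moves. Hence the total number of moves in a random game is $M_n=(n-Z(n))+S_n$, where $S_n$ counts only the splitting moves, and all of the randomness resides in $S_n$. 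It therefore suffices to prove a central limit theorem for $S_n$: that $(S_n-\mathbb{E}[S_n])/\sqrt{\mathrm{Var}(S_n)}$ converges in distribution to a standard normal.

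Next I would model the random game as a Markov chain on exponent vectors $a=(a_1,a_2,\dotsc)$ in which, from each non-terminal state, one of the finitely many legal moves is chosen uniformly, and write $S_n=\sum_t X_t$ with $X_t\in\{0,1\}$ the indicator that the $t$-th move is a split. The principal tool I would reach for is the martingale central limit theorem, in its triangular-array Lindeberg form. Building on the kind of monovariant used to prove termination --- for instance $\Phi(a)=\sum_i i\,a_i$ is non-increasing along any play and drops by exactly $1$ on every type-2c move --- one can track how far a play has progressed, express $S_n$ minus its running conditional-mean process as a sum of bounded martingale differences, and reduce the problem to two inputs: (i) a conditional Lindeberg condition, which is automatic since the increments are bounded, and (ii) convergence, after normalization, of the sum of conditional variances to a constant. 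Input (ii) is essentially a law of large numbers for the random game, and this is where the real work lies; one could alternatively attempt it through Stein's method of exchangeable pairs, but the same underlying estimates would be needed.

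A complementary strategy, to be used if the Markov-chain mixing bounds behind (ii) resist a direct attack, is a near-independence decomposition: partition the range of Fibonacci indices into consecutive windows of bounded width, show by a coupling or ``screening'' argument that the numbers of splits occurring in widely separated windows are asymptotically independent (finite-range dependence), and then invoke a central limit theorem for $m$-dependent triangular arrays, or a big-block/small-block argument, together with the fact that $\Theta(n)$ windows are active during the game. Either route requires the variance lower bound: the simulations that motivated the conjecture indicate $\mathrm{Var}(S_n)=\Theta(n)$, and establishing at least $\mathrm{Var}(S_n)\to\infty$ is unavoidable.

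The main obstacle is precisely the long-range, non-Markovian character of the dynamics near the bottom of the spectrum: the move $\{F_2^2\to F_1\land F_3\}$ regenerates small-index summands and type-2c moves push indices back down by two, so the ``local games'' in different windows feed one another and are not genuinely independent, and no low-dimensional Markovian reduction of the state is apparent. Concretely, the crux is the variance lower bound $\mathrm{Var}(S_n)\to\infty$ (presumably at the linear rate the data suggest): one must exhibit, inside a random game, $\Theta(n)$ nearly-independent ``coin flips'' --- local choices that provably alter the eventual value of $S_n$ --- and show that their effects do not systematically cancel. This is of the same flavor as, and appears at least as hard as, central limit theorems for other local combinatorial dynamics such as random sorting networks or abelian sandpiles, where the variance lower bound is exactly the delicate point.
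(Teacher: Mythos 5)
The statement you are attempting is a \emph{conjecture}: neither this paper nor \cite{BEFM1} proves it, and the only evidence the paper offers for this kind of Gaussian behavior is numerical simulation (Figures \ref{fig:fqgauss20}--\ref{fig:percents} for the Fibonacci Quilt analogue, Conjecture \ref{con:fqgauss}). So there is no proof in the paper to compare against, and your write-up must be judged as an attempted resolution of an open problem. Your opening reduction is correct and genuinely useful: the value $n$ is conserved, moves of types 1 and 2a each decrease the number of summands by exactly one while 2b and 2c preserve it, and every game terminates at the Zeckendorf decomposition, so every play uses exactly $n-Z(n)$ combining moves and the length is $(n-Z(n))+S_n$ with all randomness carried by the number $S_n$ of splitting moves. (Two small caveats: the terminal state and its value are established in \cite{BEFM1}, not by the bound in Theorem \ref{thm:zeckterm} as quoted; and $\Phi(a)=\sum_i i\,a_i$ is non-increasing but constant under 2a, 2b and $\{F_1\land F_2\to F_3\}$, so it is not by itself a termination monovariant --- the actual termination argument uses a different weighting.)

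Beyond that reduction, however, what you have is a program, not a proof, and the parts left open are precisely the substance of the conjecture. The martingale/Lindeberg route hinges on input (ii), a law of large numbers for the sum of conditional variances of the split indicators, and every route hinges on a variance lower bound $\mathrm{Var}(S_n)\to\infty$ (conjecturally $\Theta(n)$); you assert neither, and you give no concrete mechanism producing $\Theta(n)$ nearly independent local choices whose effect on $S_n$ provably does not cancel, nor any coupling or mixing estimate that would justify the proposed $m$-dependent big-block/small-block decomposition. Indeed you correctly identify the obstruction yourself: moves 2b and 2c feed summands back to small indices, so the ``local games'' interact over long ranges and no low-dimensional Markovian reduction is available. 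Until those estimates are supplied, the argument does not establish the statement, and the conjecture remains open.
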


Finally, they found that for $n > 2$,  Player 2 has the winning strategy; interestingly, however, the proof is non-constructive. While it is known that Player 2 can win, it is not known how they should play.

In this paper we generalize their results by replacing the Fibonacci numbers with the Fibonacci Quilt. We define this sequence in the next section, and explain why this is an interesting extension.

%%%%%%%%%%%%%%%%%%%%%%%%%%%%%%%%
% The Fibonacci Quilt Sequence %
%%%%%%%%%%%%%%%%%%%%%%%%%%%%%%%%
\subsubsection{The Fibonacci Quilt Sequence}\label{sec:fibq}

Previous work extended Zeckendorf's theorem to a wide class of recurrence relations (see \cite{Ho, Ke}), and has extensively studied the behavior of these decompositions. Lekkerkerker \cite{Lek} proved the mean number of terms needed in a decomposition grows linearly with the largest index in the decomposition, and Kolo$\breve{{\rm g}}$lu, Kopp, Miller, and Wang \cite{KKMW, MW1, MW2} expanded this to show the distribution of the number of terms in a decomposition of $n$ between two consecutive terms of the sequence is Gaussian. This work, however, is done on a class of recurrences called PLRS's (for Positive Linear Recurrence Relations). Briefly, these are fixed depth constant coefficient linear recurrences where the coefficients are non-negative integers, the first coefficient in the recurrence is positive, and the initial conditions are chosen appropriately; if the first coefficient is not positive then different behavior can happen, in particular unique decomposition is often lost.

When looking to expand this work further, Catral, Ford, Harris, Miller, and Nelson \cite{CFHMN} wanted to explore new interesting patterns. The Fibonacci Quilt sequence arises from a 2-dimensional construction and is eventually dictated by a recurrence relation with first coefficient zero; thus the previous work is not applicable here and while some properties are the same, we will see others are different.

Recall the alternative definition of the Fibonacci numbers stated above; they are the unique sequence of integers such that every positive integers can be uniquely written as a sum of non-consecutive terms. The Fibonacci Quilt sequence is similarly defined on the Fibonacci spiral, where each term added is the smallest positive integer that cannot be expressed as the sum of non-adjacent previous terms.

\begin{figure}[!h]
    \begin{floatrow}
        \ffigbox{\includegraphics[width=2.5in]{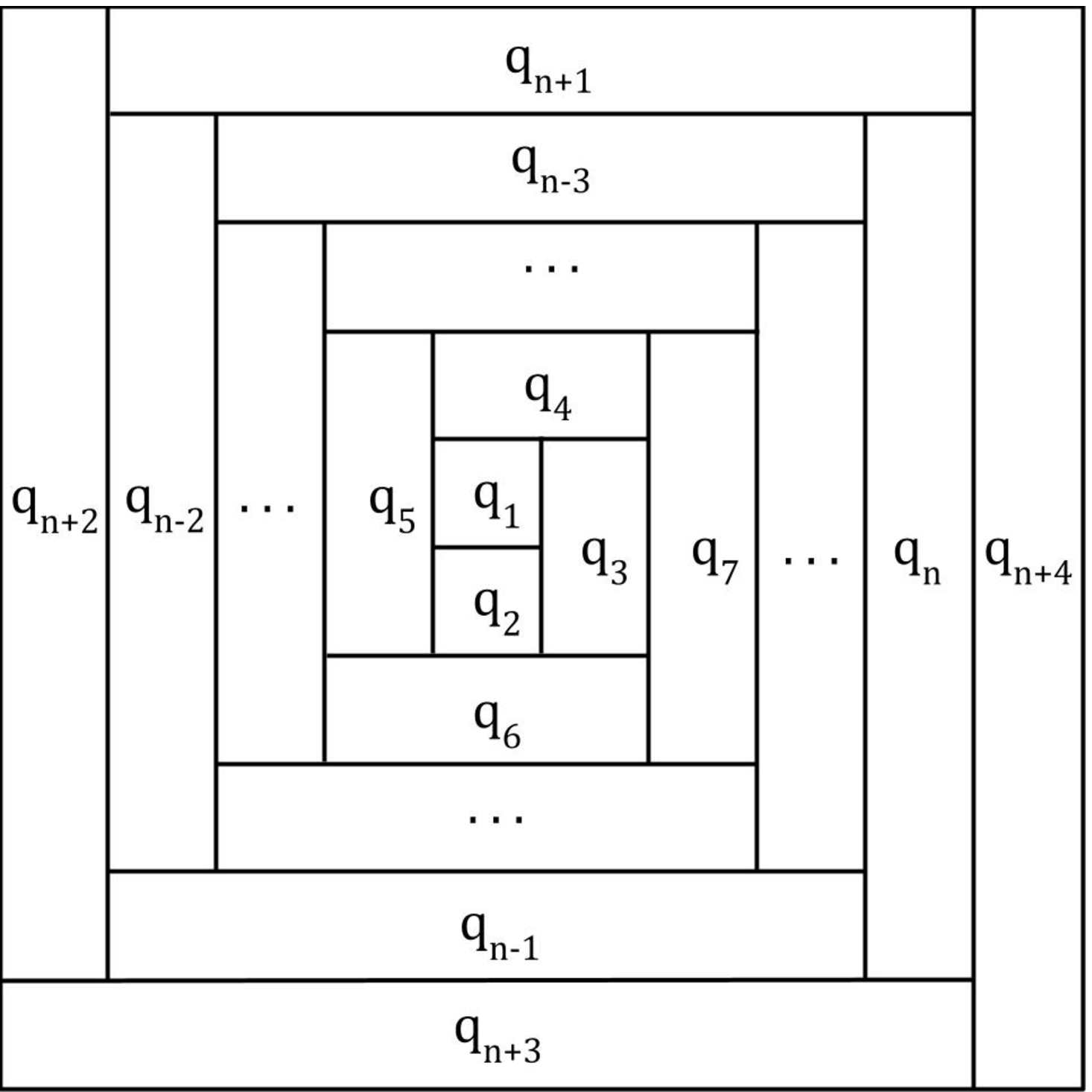}}{\caption{Log Cabin Quilt Pattern}\label{fig:lcqp}}
        \ffigbox{\includegraphics[width=2.5in]{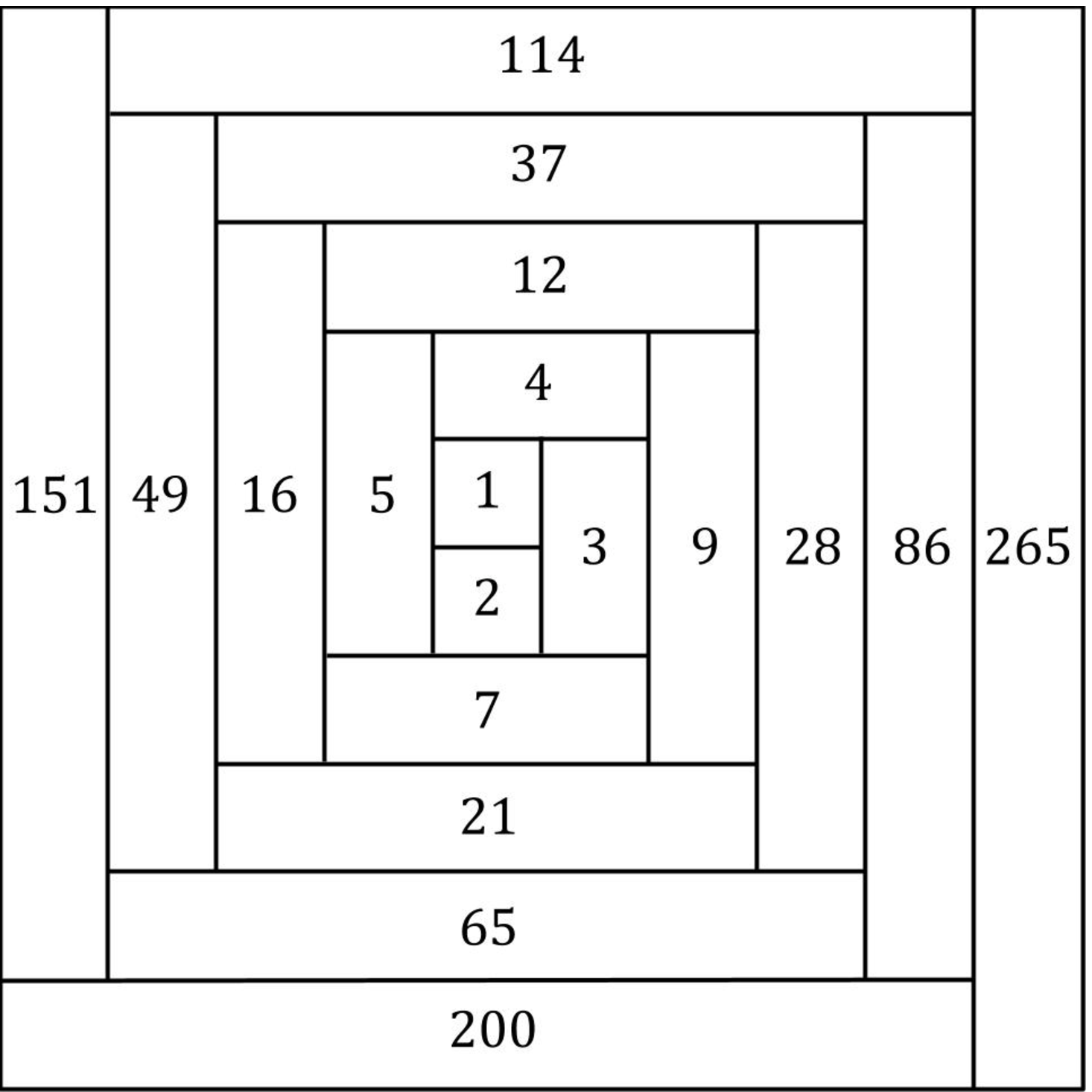}}{\caption{The Fibonacci Quilt Sequence}\label{fig:fqs}}
    \end{floatrow}
\end{figure}

The spiral is known in quilting communities as the Log Cabin quilt pattern, giving this sequence its name.  To construct the sequence begin with 1 in the $q_1$ position, then spiral out adding the smallest positive integer that cannot be expressed as the sum of non-adjacent previous terms; two terms are adjacent if they share part of a wall. In order to display more terms, we adjust the size of the spiral and make all horizontal distances 1 unit, and have the vertical distances the appropriate size from the spiral. For example, the first positive integer we do not add is $6$, since it can be expressed as $2 + 4$. To formalize our definition of this sequence, we must first formalize what it means to be expressed as the sum of non-adjacent terms.

\begin{definition}[FQ-legal decomposition]\label{def:fqlegal} \cite{CFHMN}
    Let an increasing sequence of positive integers $\{q_i\}_{i=1}^\infty$ be given. We declare a decomposition of an integer
    \begin{equation}
        m\ =\ q_{\ell_1}  +  q_{\ell_2} + \cdots + q_{\ell_t}
    \end{equation}
    (where $q_{\ell_i} > q_{\ell_{i+1}}$) to be an FQ-legal decomposition if for all $i$ and $j$ we have  $|\ell_i - \ell_j| \neq 0,1,3,4$ and $\{1,3\}\not\subset\{\ell_1,\ell_2,\dots,\ell_t\}$.
\end{definition}

To better understand this definition see Figure \ref{fig:lcqp}. Looking at terms less than or equal to itself, $q_{n+4}$ is adjacent to itself, $q_{n}$, $q_{n+1}$, and $q_{n+3}$, thus if any of these were present in the decomposition of some $m$, then $q_{n+4}$ could not be present without violating this definition. Further $q_1$ and $q_3$ cannot both be present since they are adjacent in the center of the quilt, although no other $q_n$ and $q_{n+2}$ are. Interestingly, unlike the Fibonacci numbers, not all integers have a unique FQ-legal decompositions; for example, $8 = 1 + 7 = 3 + 5$ are both FQ-legal decompositions of 8.

With this we can now formalize the definition of the Fibonacci Quilt Sequence.

\begin{definition}[Fibonacci Quilt Sequence]\label{def:fqs} \cite{CFHMN}
    An increasing sequence of positive integers $\{q_i\}_{i=1}^\infty$ is called the Fibonacci Quilt sequence if every $q_i$ ($i\geq1$) is the smallest positive integer that does not have an FQ-legal decomposition using the elements $\{q_1, \dots, q_{i-1}\}$.
\end{definition}

While this definition is mathematically precise, in practice it is still computation and time intensive to determine $q_n$, even knowing $q_1, q_2, \dots, q_{n-1}$. Luckily after a short time, the behavior of the sequence can be explained by recurrence relations.

\begin{theorem}[Recurrence Relations]\label{thm:rrs} \cite{CFHMN}
    Let $q_n$ denote the $n$\textsuperscript{{\rm th}} term in the Fibonacci Quilt. Then
    \begin{eqnarray}\label{eq:fibquiltrelations}
        q_{n+1} & \ =\ & q_n + q_{n-4} \ \ \text{for } n \geq 6, \nonumber\\
        q_{n+1} & \ =\ & q_{n-1} + q_{n-2} \ \ \text{for } n \geq 5, \nonumber\\
        \sum_{i=1}^n q_i & \ = \ & q_{n+5} - 6.
    \end{eqnarray}
\end{theorem}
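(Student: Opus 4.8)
The plan is to prove all three identities together by strong induction on $n$, with the first recurrence as the primary statement. Suppose $q_1,\dots,q_n$ are known and satisfy both recurrences (in their valid ranges) together with $\sum_{i=1}^{\ell}q_i=q_{\ell+5}-6$ for every $\ell$ with $\ell+5\le n$; the inductive step is to identify $q_{n+1}$. By Definition \ref{def:fqs}, $q_{n+1}$ is the least positive integer with no FQ-legal decomposition using $\{q_1,\dots,q_n\}$, so, writing $V:=q_n+q_{n-4}$, it suffices to show (a) every integer $m$ with $0<m<V$ does have such a decomposition, and (b) $V$ does not. Once $q_{n+1}=V$ is established, the identity $q_n+q_{n-4}=q_{n-1}+q_{n-2}$ --- hence the second recurrence at index $n$ --- follows by algebra from the recurrences at smaller indices (for $n$ large, $q_n+q_{n-4}=(q_{n-1}+q_{n-5})+q_{n-4}=q_{n-1}+q_{n-2}$), and the summation identity extends since $\sum_{i=1}^{n-4}q_i=\sum_{i=1}^{n-5}q_i+q_{n-4}=(q_n-6)+q_{n-4}=q_{n+1}-6$; the first several terms of the sequence are computed by hand to seed the induction and to cover the small ranges where these manipulations are not yet licensed.

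Part (a) is the easy half. The key point, immediate from Definition \ref{def:fqs}, is that for every $k$ each positive integer less than $q_k$ has an FQ-legal decomposition using only $\{q_1,\dots,q_{k-1}\}$ --- otherwise a smaller non-decomposable integer would exist --- and that no decomposition of an integer $m$ can use a term exceeding $m$. Hence every integer in $[1,q_n]$ is FQ-legally decomposable over $\{q_1,\dots,q_n\}$, and for $q_n<m<V$ we write $m=q_n+r$ with $0<r<q_{n-4}$: any FQ-legal decomposition of $r$ uses only terms $q_i$ with $i\le n-5$, so adjoining $q_n$ keeps every pairwise index gap at least $5$ (hence outside $\{0,1,3,4\}$) and, as $n\ge 6$, creates no forbidden $\{1,3\}$ pair.

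Part (b) is the heart. Suppose $V=\sum_{i\in S}q_i$ for an FQ-legal index set $S\subseteq\{1,\dots,n\}$, and split on $j:=\max S$. If $j\le n-5$ then $V=\sum_{i\in S}q_i\le\sum_{i=1}^{n-5}q_i=q_n-6<V$, a contradiction. If $j=n$, the forbidden difference $4$ excludes $n-4$ from $S$ and positivity excludes $n-2$ (it would overshoot $V$, since $q_{n-2}>q_{n-4}$), so $S\setminus\{n\}$ is an FQ-legal decomposition of $V-q_n=q_{n-4}$ all of whose terms lie in $\{q_1,\dots,q_{n-5}\}$, contradicting that $q_{n-4}$ admits no such decomposition. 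If $j=n-1$, the forbidden differences pin $S$ inside $\{n-1,n-3\}\cup\{1,\dots,n-6\}$, and the identity $V=q_{n-1}+q_{n-2}$ (valid by the recurrences below $n$) then exhibits $S\setminus\{n-1\}$ as an FQ-legal decomposition of $q_{n-2}$ with all terms in $\{q_1,\dots,q_{n-3}\}$, again contradicting Definition \ref{def:fqs}. Finally, for $j\in\{n-2,n-3,n-4\}$, writing $M(m)$ for the maximum of $\sum_{i\in S}q_i$ over FQ-legal $S\subseteq\{1,\dots,m\}$, we have $\sum_{i\in S}q_i\le M(n-2)$, and it remains to show $M(n-2)<V$.

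The auxiliary bound is where the care is needed, and it is not the naive one: $M(m)<q_{m+2}$ fails (already $M(11)=50>q_{13}=49$), but $M(m)<q_{m+3}$ holds, and this is just strong enough because the recurrences give $q_{(n-2)+3}=V$. One proves $M(m)<q_{m+3}$ by induction from the recursion $M(m)=\max\bigl(M(m-1),\,q_m+M(m-5),\,q_m+q_{m-2}+M(m-7)\bigr)$ (the top index is absent, present as a singleton block, or present in a pair with index $m-2$), bounding the three branches via the inductive hypothesis and the telescoped identity $q_{m+3}=q_m+q_{m-2}+q_{m-3}+q_{m-4}$ by $q_{m+2}$, $q_m+q_{m-2}$, and $q_m+q_{m-2}+q_{m-4}$ respectively, each strictly below $q_{m+3}$. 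I expect the main obstacle to be exactly this interlocking: pinning down the correct auxiliary inequality, checking that each reduction in (b) really terminates in a contradiction with minimality or with the summation identity, and confining the finitely many small-index and $\{1,3\}$-sensitive cases --- where FQ-legal decompositions need not be unique --- to the base of the induction so that they do not derail the descent.
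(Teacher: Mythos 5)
The paper does not prove this theorem at all: it is imported verbatim from \cite{CFHMN}, so there is no internal proof to compare your argument against, and it must be judged on its own. On that basis your plan is sound and, as far as I can check, correct in all essentials. The reduction of the inductive step to (a) every $m<V:=q_n+q_{n-4}$ is decomposable and (b) $V$ is not, the use of minimality in Definition \ref{def:fqs} to decompose $r<q_{n-4}$ over indices $\le n-5$ and adjoin $q_n$ with all gaps $\ge 5$, and the case split on $j=\max S$ in (b) (with $j\le n-5$ killed by the partial-sum identity, $j=n$ reduced to an impossible decomposition of $q_{n-4}$, $j=n-1$ reduced via $V=q_{n-1}+q_{n-2}$ to an impossible decomposition of $q_{n-2}$) all check out, as does your auxiliary bound: the recursion $M(m)\le\max\bigl(M(m-1),\,q_m+M(m-5),\,q_m+q_{m-2}+M(m-7)\bigr)$ is forced by the forbidden gaps $\{0,1,3,4\}$, the counterexample $M(11)\ge q_{11}+q_9+q_4+q_2=50>q_{13}=49$ correctly rules out the naive bound, and the telescoped identity $q_{m+3}=q_m+q_{m-2}+q_{m-3}+q_{m-4}$ makes each branch strictly smaller than $q_{m+3}$. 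The one spot needing more careful phrasing is the remark that ``the recurrences give $q_{(n-2)+3}=V$'': inside the induction $q_{n+1}$ is not yet defined, so the bound should be stated and used in its telescoped form $M(n-2)<q_{n-2}+q_{n-4}+q_{n-5}+q_{n-6}=q_n+q_{n-4}=V$, which involves only already-established terms and recurrences; since that is exactly what your $M$-induction produces, this is a presentational fix rather than a gap. What remains to turn the sketch into a proof is routine: the hand verification of the initial terms and of the finitely many small-index cases (including the $\{1,3\}$ exception) that seed both the main induction and the $M$-induction, which you explicitly flag.
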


Note that the recurrence relation of \emph{minimal} length is the second above, and as the leading coefficient there (the $q_n$ term) is zero we do not have a PLRS.\footnote{The first recurrence relation is a PLRS, but the initial conditions for the Fibonacci Quilt come from the second relation, and thus while this could generate a PLRS, it does not generate a PLRS for our situation due to the different initial conditions.}

From these recurrence relations we can build our game, which we describe in the next section. Similar to the Zeckendorf Game, the rules follow from the recurrence relations that describe the sequence, however new interesting features arise from the non-uniqueness of decompositions, and the different behavior of the quilt at the center coming from its 2-dimensional definition.

%%%%%%%%%%%%%%%%
% MAIN RESULTS %
%%%%%%%%%%%%%%%%
%%%%%%%%%%%%%%%%%%%%%%%%%%%%%%%%%%%%%%%%%%%%%%%%%%%%%%%%%%%%%%%%%%%%%%%%%
%%%%%%%%%%%%%%%%%%%%%%%%%%%%%%%%%%%%%%%%%%%%%%%%%%%%%%%%%%%%%%%%%%%%%%%%%

\subsection{Main Results}\label{sec:mainresults}
Although the Fibonacci Quilt Game is adapted from the Zeckendorf Game, it requires many more moves. This is firstly because in a Zeckendorf Decomposition there are only two criteria required for legality: no duplicate terms, and no consecutive terms. The Fibonacci Quilt Game requires five, which are direct results of Definition \ref{def:fqlegal}: no duplicate terms, no consecutive terms, no terms of distance 3 apart, no terms of distance 4 apart, and 1 and 3 cannot both be present. Each of these requirements creates a new rule.

Each of these rules also requires many base case rules, this is due to the construction of the Fibonacci Quilt sequence; the quilt behaves differently in the center causing the recurrence relations in \eqref{eq:fibquiltrelations} to begin later. The base rules are largely intuitive, e.g., $1 \land 2 = 3$ not $4$, as it would in the general rule. The general rules arise from how the recurrence relation combines terms. The most interesting is Rule (2a) below, which states a certain move can only be done if no other moves are available; without this addition the game need not terminate. It is similar in spirit to the Greedy-6 decomposition from \cite{CFHMN} (which leads to unique decompositions). We will see later that we can associate an almost monovariant to the game; it breaks down for Rule (2a), but our requirements imply that this rule is used at most once, and thus our quantity is effectively as good as a true monovariant.

The notation used for the Fibonacci Quilt Game is similar to that of the Zeckendorf Game. Let $\{1^n\}$ or $\{q_1^n\}$ be $n$ copies of 1, and in general $\{q_i^n\}$ be $n$ copies of $q_i$. For example, $\{q_1^3 \land q_3^2 \land q_4^1 \}$ would be three copies of 1, two copies of 3, and one copy of 4.

\begin{definition}[The Two Player Fibonacci Quilt Game] \label{def:fibqgame}
At the beginning of the game there is an unordered list of $n$ 1's. Let $q_1 = 1$, $q_2 = 2$, $q_3 = 3$, $q_4 = 4$, and, for $i \geq 5$, $q_i = q_{i-3} + q_{i-2}$; therefore the initial list is $\{q_1^{n}\}$. Players alternate turns, and on each turn can make one of the following moves.
    \begin{enumerate}
        \item If the list contains two consecutive Fibonacci Quilt terms, $q_i$ and $q_{i+1}$, then
        \begin{enumerate}
            \item if $i = 1$, a player can change $q_1$ and $q_2$ to $q_3$, denoted $\{q_1 \land q_2 \rightarrow q_3\}$, and
            \item if $i \geq 2$, a player can change $q_i$ and $q_{i+1}$ to $q_{i+3}$, denoted $\{q_i \land q_{i+1} \rightarrow q_{i+3}\}$.
        \end{enumerate}
        \item If the list contains two Fibonacci Quilt terms of distance 4 apart, $q_i$ and $q_{i+4}$, then
        \begin{enumerate}
            \item if $i = 1$, and no other moves are possible, a player can change $q_1$ and $q_5$ to $q_2$ and $q_4$, denoted $\{q_1  \land q_5 \rightarrow q_2  \land q_4 \}$, and
            \item if $i \geq 2$, a player can change $q_i$ and $q_{i+4}$ to $q_{i+5}$, denoted $\{q_i \land  q_{i+4} \rightarrow q_{i+5}\}$.
        \end{enumerate}
        \item If the list contains two (or more) of the same Fibonacci Quilt term, $q_i$, then
        \begin{enumerate}
            \item if $i = 1$, a player can change $q_1$ and $q_1$ to $q_2$, denoted $\{q_1^2 \rightarrow q_2\}$,
            \item if $i = 2$, a player can change $q_2$ and $q_2$ to $q_4$, denoted $\{q_2^2 \rightarrow q_4\}$,
            \item if $i = 3$, a player can change $q_3$ and $q_3$ to $q_2$ and $q_4$, denoted $\{q_3^2 \rightarrow q_2  \land  q_4\}$,
            \item if $i = 4$, a player can choose to change $q_4$ and $q_4$ to $q_1$ and $q_6$ \textbf{or} $q_3$ and $q_5$, denoted $\{q_4^2 \rightarrow q_1 \land  q_6\}$ and $\{q_4^2 \rightarrow q_3  \land  q_5\}$ respectively,
            \item if $i = 5$, a player can change $q_5$ and $q_5$ to $q_1$ and $q_7$, denoted $\{q_5^2 \rightarrow q_1 \land  q_7\}$,
            \item if $i = 6$, a player can choose to change $q_6$ and $q_6$ to $q_2$ and $q_8$ \textbf{or} $q_3$ and $q_7$, denoted $\{q_6^2 \rightarrow q_2 \land  q_8\}$ and $\{q_6^2 \rightarrow q_3  \land q_7\}$ respectively, and
            \item if $i \geq 7$, a player can change $q_i$ and $q_i$ to $q_{i-5}$ and $q_{i+2}$, denoted $\{q_i^2 \rightarrow q_{i-5}  \land q_{i+2}\}$.
        \end{enumerate}
        \item If the list contains two Fibonacci Quilt terms of distance 3 apart, $q_i$ and $q_{i+3}$, then
        \begin{enumerate}
            \item if $i = 1,2$, a player can change $q_i$ and $q_{i+3}$ to $q_{i+4}$, denoted $\{q_i  \land  q_{i+3} \rightarrow q_{i+4}\}$,
            \item if $i = 3$, a player can change $q_3$ and $q_6$ to $q_1$ and $q_7$, denoted $\{q_3  \land  q_6 \rightarrow q_1  \land q_7\}$,
            \item if $i = 4,5$, a player can change $q_i$ and $q_{i+3}$ to $q_1$ and $q_{i+4}$, denoted $\{q_i  \land \ q_{i+3} \rightarrow q_1  \land  q_{i+4}\}$,
            \item if $i = 6$, a player can change $q_6$ and $q_9$ to $q_2$ and $q_{10}$, denoted $\{q_6  \land  q_9 \rightarrow q_2  \land  q_{10}\}$, and
            \item if $i \geq 7$, a player can change $q_i$ and $q_{i+3}$ to $q_{i-5}$ and $q_{i+4}$, denoted $\{q_i  \land  q_{i+3} \rightarrow q_{i-5}  \land  q_{i+4}\}$.
        \end{enumerate}
        \item If the list contains $q_1$ and $q_3$, a player can change $q_1$ and $q_3$ to $q_4$, denoted $\{q_1  \land  q_3 \rightarrow q_4\}$.
    \end{enumerate}
    The game ends when there are no possible moves, and whomever made the last move wins.
\end{definition}

The moves for this game may seem random, but they are a direct result of the recurrence relations stated in Theorem \ref{thm:rrs}, and Definition \ref{def:fqlegal} (FQ-legal decomposition). Each rule when applied takes two terms which could not be in a legal decomposition together and changes them to a legal term or pair of terms. For example, Rule 1 takes terms which are distance 1 apart, or $q_i$ and $q_j$ such that $j-i=1$, and changes them to a single term.

There are many cases for each rule because the Fibonacci Quilt sequence does not follow the recurrence relations of \eqref{eq:fibquiltrelations} at the very beginning, and thus the same rules cannot be applied there. Each base rule is created to change terms to a legal term or pair of terms while preserving that the sum of the list is $n$.

Two important things to note are Rule (2a) and Rules (3d) and (3f). Rule (2a) can only be applied when no other moves are possible, that is the list contains no other illegal pairs besides ($q_1,q_5$). For Rules (3d) and (3f) the player has two options, this is because the Fibonacci Quilt Sequence lacks uniqueness, so $n=8$ can be decomposed into $1+7$ or $3+5$, both of which are legal. We will show later that for $i\geq 7$, $2q_i$ can only be decomposed into two terms legally by Rule (3g).

With this construction we first show that it is well-defined, and then study the length of a game.

\begin{theorem}\label{thm:terminates}
    Every game terminates in a finite number of moves at a FQ-legal decomposition.
\end{theorem}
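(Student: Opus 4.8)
The plan is to prove the statement in two halves: first that the process cannot run forever, and then that when it stops the resulting list is an FQ-legal decomposition of $n$. Both halves rest on the basic invariant that every legal move preserves the sum of the list. One checks this directly for each base-case sub-rule --- e.g.\ $q_1+q_5 = 6 = q_2+q_4$ for Rule (2a), $q_3+q_3 = 6 = q_2+q_4$ for Rule (3c) --- while each general sub-rule is just a rearrangement of the recurrences in Theorem \ref{thm:rrs}: Rule (1b) is $q_{i+3}=q_{i+1}+q_i$, Rule (2b) is $q_{i+5}=q_{i+4}+q_i$, Rule (3g) is $2q_i=q_{i-5}+q_{i+2}$ (combine $q_{i+2}=q_i+q_{i-1}$ with $q_i=q_{i-1}+q_{i-5}$), and Rule (4e) is $q_i+q_{i+3}=q_{i-5}+q_{i+4}$. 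Consequently, at every stage the list is a multiset of Fibonacci Quilt terms summing to $n$; in particular it has at most $n$ terms, each at most $n$, so the game has only finitely many reachable positions.

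The game stops exactly when no sub-rule applies, so for the second half it suffices to show that any list $L$ which is \emph{not} FQ-legal admits a move. By Definition \ref{def:fqlegal}, such an $L$ contains a repeated term (the hypothesis of Rule (3)), or two terms with index-difference $1$ (Rule (1)), $3$ (Rule (4)), or $4$ (Rule (2)), or else both $q_1$ and $q_3$ (Rule (5)); and the sub-cases within each of these rules cover every admissible index. The only subtlety is a distance-$4$ pair $(q_1,q_5)$, because Rule (2a) carries the side condition ``no other moves are possible'': but if $(q_1,q_5)$ is the sole illegal configuration then that condition holds and Rule (2a) fires, whereas if there is another illegal configuration it yields a move directly. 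Hence a terminal list is FQ-legal, and since it still sums to $n$ it is an FQ-legal decomposition of $n$.

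For termination I would combine two weak monovariants with a no-cycles argument. Let $N(L)$ be the number of terms of $L$ and $S(L)=\sum_{q_i\in L} i$ the sum of its indices. Examining the rules: each ``combining'' sub-rule (Rules (1a), (1b), (2b), (3a), (3b), (4a), (5)) replaces two terms by one, strictly decreasing $N$ and weakly decreasing $S$; every other sub-rule keeps $N$ fixed, and among those rules $S$ weakly decreases, with equality \emph{only} for Rule (2a), Rule (3c), and the second option of Rule (3d), namely $\{q_4^2\to q_3\land q_5\}$. So $N$ and $S$ never increase. Now suppose some play were infinite; as there are finitely many positions, a position must recur, producing a closed loop of moves. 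Around the loop $N$ returns to its initial value and hence is constant, so no combining move occurs; similarly $S$ is constant, so only Rule (2a), Rule (3c), and $\{q_4^2\to q_3\land q_5\}$ occur. But Rule (2a) strictly lowers the multiplicity of $q_1$ while neither of the other two ever creates a $q_1$; since that multiplicity must return to its start, Rule (2a) occurs zero times. Then $\{q_4^2\to q_3\land q_5\}$ strictly raises the multiplicity of $q_5$ while Rule (3c) never creates one, so it occurs zero times. Finally Rule (3c) strictly lowers the multiplicity of $q_3$, so it occurs zero times. The loop is empty, a contradiction; hence every play is finite.

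The genuine obstacle is Rule (2a): it is exactly the move on which the natural monovariants stall, so the heart of the argument is the cascading observation above that no repeating loop can contain Rule (2a) (nor the two other index-sum-preserving splits) --- the precise form of the remark in the introduction that such a rule is effectively used only a bounded number of times. Everything else is bookkeeping: the finite but sizeable check that each of the two dozen or so sub-rules preserves the list-sum and does not increase $N$ or $S$, together with the routine case analysis showing a non-FQ-legal list always offers a move.
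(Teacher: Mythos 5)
Your proposal is correct, but it reaches termination by a genuinely different route than the paper. The paper's proof rests on two lemmas: Lemma \ref{lem:15once}, a lengthy tree/case analysis showing Rule (2a) can be used at most once in any game, and Lemma \ref{lem:mono}, a strict monovariant (the sum of the square roots of the indices) that decreases under every move except (2a); termination then follows by splitting the game into at most two sub-games. You dispense with both: you note that the number of terms $N$ and the index sum $S$ never increase, that only finitely many positions exist (sum preservation), and then destroy any hypothetical cycle by a multiplicity cascade ($q_1$ eliminates (2a), then $q_5$ eliminates $\{q_4^2\to q_3\land q_5\}$, then $q_3$ eliminates (3c)). This is leaner --- only integer bookkeeping, no irrational inequalities, and no need for the delicate ``at most once'' lemma --- and your treatment of the terminal position is more careful than the paper's, since you explicitly check that the side condition on Rule (2a) cannot block a move at a non-FQ-legal list. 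What the paper's route buys in exchange is quantitative and structural information: the strict monovariant and the at-most-once property are what the paper points to when discussing upper bounds on game length, whereas your cycle argument proves termination without yielding any effective bound on the number of moves, and does not by itself recover Lemma \ref{lem:15once}.

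One bookkeeping point does not survive a literal reading of Definition \ref{def:fibqgame}: the second option of Rule (3f), $\{q_6^2\to q_3\land q_7\}$, does not preserve the sum, since $2q_6=14$ while $q_3+q_7=3+9=12$; this is evidently a misprint for $\{q_6^2\to q_5\land q_7\}$, as $14=q_5+q_7$ is the other two-term FQ-legal decomposition of $14$ (parallel to $8=1+7=3+5$ for Rule (3d)). With the corrected move your blanket sum-preservation claim is restored, but then the index sum is preserved as well ($6+6=5+7$), so this move must be added to your list of $S$-equality moves and your cascade needs one more step: it strictly decreases the multiplicity of $q_6$, which none of (2a), (3c), $\{q_4^2\to q_3\land q_5\}$ ever creates, so it too cannot occur in a loop. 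With that one-line addition your argument is complete; as written, either the sum-preservation claim or the stated list of $S$-preserving moves is off by this one (misprinted) rule, but the structure of the proof is unaffected.
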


Knowing that the game terminates, we can also ask how quickly it can end. We give a result for the shortest game; as we are able to associate a monovariant to the game, by looking at the smallest change possible for the summands that can be in play (i.e., we can never have a summand larger $q_m > n$) one could isolate an upper bound as well.

\begin{theorem}\label{thm:shortest}
    The shortest game on $n$ arrives at a FQ-legal decomposition in $n - L(n)$ moves, where $L(n)$ is the maximum number of terms in a FQ-legal decomposition of $n$.
\end{theorem}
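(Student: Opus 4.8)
The plan is to mimic the monovariant argument from the Zeckendorf Game, with two adjustments forced by the Fibonacci Quilt setting: the existence of moves that do not decrease the number of summands (so we must argue termination of the shortest game differently from merely "each move shrinks the list"), and the non-uniqueness of FQ-legal decompositions (which is why the target is $L(n)$, the \emph{maximum} number of terms in an FQ-legal decomposition, rather than a unique count). First I would establish the easy direction: every move either decreases the number of summands by one (Rules 1, 2a, 2b, 3a, 3c, 3e, 5) or leaves it unchanged (the "$q_i^2 \to q_a \land q_b$" moves and Rules 3b, 3d, 3f, and the base cases of Rule 3). Since by Theorem~\ref{thm:terminates} every game ends at an FQ-legal decomposition with some number of terms $t \le L(n)$, and we start with $n$ terms, the number of summand-decreasing moves is exactly $n - t \ge n - L(n)$; hence \emph{every} game has length at least $n - L(n)$, and in particular the shortest does.

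The harder direction is exhibiting a game of length exactly $n - L(n)$. The natural strategy: fix an FQ-legal decomposition $n = q_{\ell_1} + \cdots + q_{\ell_{L(n)}}$ achieving the maximum, and play so that (i) we never use a move that fails to reduce the summand count, and (ii) we reach precisely this decomposition. For (i), one must check that from any non-terminal position we can always find a count-reducing move \emph{unless} the only available move is Rule 2a; and Rule 2a, by its own hypothesis, is used only when nothing else is possible, and (as the paper notes in the discussion of the almost-monovariant) at most once in an entire game. So the only danger is being forced into a position where Rule 2a is the unique move while we have not yet reached a maximal-length decomposition; I would argue that this cannot be engineered if we are careful, or alternatively absorb the single possible Rule-2a move into the bookkeeping (it preserves the summand count, so it does not affect the count of count-reducing moves, only potentially the final term-count — and one checks $\{q_1 \land q_5 \to q_2 \land q_4\}$ does not decrease the number of terms and leaves a position that is FQ-legal or can be pushed to one of maximal length).

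The crux is a combinatorial lemma of the form: starting from $\{q_1^n\}$ there is a sequence of moves, each reducing the number of summands, terminating at a chosen maximum-length FQ-legal decomposition of $n$. I would prove this by induction on $n$, or better, by a direct "greedy-from-the-target" construction: think of building up the target multiset of Fibonacci Quilt terms from $n$ ones by repeatedly \emph{combining}, using the recurrences of Theorem~\ref{thm:rrs} ($q_{i+1} = q_{i-1} + q_{i-2}$ and $q_{i+1} = q_i + q_{i-4}$ and their base-case analogues) to realize each needed term, always via the count-reducing versions of the rules. Here the distinction between $L(n)$ and, say, the Zeckendorf count matters: because FQ-decompositions are not unique, to get a game of length exactly $n - L(n)$ one must target a \emph{longest} legal decomposition, and one must verify such a decomposition is reachable by legal count-reducing moves (e.g.\ preferring $\{q_4^2 \to q_3 \land q_5\}$ over $\{q_4^2 \to q_1 \land q_6\}$ when it keeps more small terms available, and similarly for Rule 3d, 3f).

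The main obstacle I anticipate is precisely the interaction between the count-preserving moves and maximality: showing that a longest FQ-legal decomposition is always reachable through a chain of count-\emph{reducing} moves, and that we are never trapped into needing Rule 2a before we get there (or, if we are, that the trap still lands on a maximal decomposition). This requires a careful case analysis of the base-case rules near the center of the quilt, exactly where the recurrences in \eqref{eq:fibquiltrelations} do not yet apply; everything for indices $\ge 7$ is uniform and handled by the generic rules, so the real work is a finite check in the low-index regime combined with an induction to push large $n$ down to that regime.
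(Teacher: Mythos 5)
Your lower bound matches the paper's: no move removes more than one summand, and the game must end at an FQ-legal decomposition, which by definition of $L(n)$ has at most $L(n)$ terms, so at least $n-L(n)$ moves are needed. (Minor slip: your classification of the count-decreasing moves is off in places --- e.g.\ Rule (2a), $\{q_1 \land q_5 \rightarrow q_2 \land q_4\}$, takes two terms to two terms, and you list it both as count-decreasing and, later, as count-preserving --- but only the ``at most one per move'' fact is needed, so the bound stands.)

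The genuine gap is the other direction. You correctly identify that one must exhibit a game of exactly $n-L(n)$ moves reaching a longest FQ-legal decomposition, but you leave this as an unproven ``crux lemma'' and raise obstacles (being trapped into Rule (2a), count-preserving moves interfering with maximality) that in fact never arise: since you are free to write down one specific sequence of moves --- no move in this game is ever forced, and every rule except (2a) is applicable whenever its two input terms are present, regardless of what else sits in the list --- you simply construct a sequence that never uses (2a) and never uses a count-preserving move. The paper does this with a short induction: for $j\geq 5$, build $q_j$ from $q_j$ ones in $q_j-1$ moves by recursively building $q_{j-3}$ and $q_{j-2}$ from their own blocks of ones (in $q_{j-3}-1$ and $q_{j-2}-1$ moves) and then applying the single count-reducing move $\{q_{j-3}\land q_{j-2}\rightarrow q_j\}$ of Rule (1); the cases $j\leq 4$ are checked by hand. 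Fixing a maximal decomposition $n=q_{\ell_1}+\cdots+q_{\ell_{L(n)}}$ and building its terms one after another from disjoint blocks of ones gives $\sum_i (q_{\ell_i}-1)=n-L(n)$ moves and ends at an FQ-legal, hence terminal, position. In particular \emph{any} FQ-legal decomposition is reachable by count-reducing moves; the maximality of $L(n)$ is used only in the lower bound, so the ``reachability of a longest decomposition'' issue you flag as the main obstacle dissolves once the construction is written out.
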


We can also look at the length of a completely random game.

\begin{conjecture}\label{con:fqgauss}
    As $n$ goes  to  infinity,  the  number  of  moves  in  a  random game  decomposing $n$ into it's Zeckendorf expansion, when all legal moves are equally likely, converges to a Gaussian.
\end{conjecture}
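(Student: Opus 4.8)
We sketch a possible route to Conjecture~\ref{con:fqgauss}. Write $M_n$ for the number of moves in a game that starts from $\{q_1^n\}$ and at each turn picks uniformly among the legal moves; the goal is $(M_n-\mathbb E M_n)/\sqrt{\operatorname{Var}M_n}\Rightarrow\mathcal N(0,1)$. Call a move \emph{merging} if it replaces two summands by one (Rules (1), (2b), (3a), (3b), (4a), (5)) and \emph{shuffling} otherwise (the moves $q_i\wedge q_j\to q_k\wedge q_\ell$, which include Rule (2a)). No move increases the number of summands and every merging move lowers it by exactly one, so if $N_{\mathrm{fin}}$ denotes the number of terms in the (random) terminal FQ-legal decomposition and $S_n$ the number of shuffling moves played, then
\begin{equation}\label{eq:Mdecomp}
M_n\ =\ \bigl(n-N_{\mathrm{fin}}\bigr)+S_n .
\end{equation}
Since the $q_i$ grow geometrically and the indices of an FQ-legal decomposition are $2$ or at least $5$ apart, $N_{\mathrm{fin}}\le L(n)=O(\log n)$ surely (with $L(n)$ as in Theorem~\ref{thm:shortest}); hence, provided $\operatorname{Var}(M_n)\gg(\log n)^2$, the CLT for $M_n$ is equivalent to the CLT for $S_n$.

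\textbf{Step 1 (a monovariant; $M_n=\Theta(n)$).} Put $\Phi(S):=\sum_{q_i\in S}\sqrt{q_i}$. One checks that $\Phi$ strictly decreases on every move other than Rule~(2a): for a merging move $q_i\wedge q_j\to q_k$ this is $\sqrt{q_i+q_j}<\sqrt{q_i}+\sqrt{q_j}$ (each such rule has $q_k=q_i+q_j$, e.g.\ $q_{i+3}=q_i+q_{i+1}$), and for a shuffling move $q_i\wedge q_j\to q_k\wedge q_\ell$ it follows from sum-preservation $q_k+q_\ell=q_i+q_j$, the fact that $\{q_k,q_\ell\}$ is strictly more spread out than $\{q_i,q_j\}$, and strict concavity of $\sqrt{\,\cdot\,}$; Rule~(2a) is the lone exception (there $\{q_2,q_4\}$ is \emph{less} spread than $\{q_1,q_5\}$, so $\Phi$ rises), but by construction it is played at most once. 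A finite check shows the decrease is at least a fixed $\delta>0$ on every non-(2a) move (it is of order $\sqrt{q_i}$ on the rules for large $i$ and a positive constant on each base rule), and $\Phi(\{q_1^n\})=n$ while $\Phi\ge 1$ always; this re-proves termination and yields $M_n\le n/\delta+O(1)$, so $M_n=\Theta(n)$ surely and $\mathbb E M_n=\Theta(n)$. We would then upgrade to $\mathbb E M_n\sim\mu n$ and, crucially, $\operatorname{Var}(M_n)=\Theta(n)$ (in particular $\gg(\log n)^2$) by a Fekete/subadditivity argument: a play from $\{q_1^{a+b}\}$ couples to nearly-independent plays from $\{q_1^{a}\}$ and $\{q_1^{b}\}$ up to a bounded-length interface correction, giving near-additivity of the cumulant generating function of $M_n$ and, as a by-product, the regeneration structure used below.

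\textbf{Step 2 (blocks and the CLT).} The core is to express $S_n$ as a sum of weakly dependent pieces. Moves acting on disjoint, far-apart index ranges commute and neither enable nor disable one another, so a random play can be reorganized---without changing its length---into a sequence of ``local reactions'', each (away from the $O(1)$ anomalous indices $q_1,\dots,q_6$ and the single possible Rule~(2a)) confined to a short, slowly-moving window of indices. Cutting at the $\Theta(n)$ random, roughly evenly spaced times where the window-interaction pattern disconnects, write $S_n=\sum_{k\le K_n}X_k$ with $K_n=\Theta(n)$ and $X_k$ the number of shuffling reactions in block~$k$. The coupling of Step~1 makes $(X_k)$ an $m$-dependent array for a fixed $m$, with uniformly bounded exponential moments and conditional variances bounded below; hence $\operatorname{Var}(S_n)=\Theta(n)$ and, by the CLT for $m$-dependent arrays (equivalently, the martingale CLT for the Doob martingale $t\mapsto\mathbb E[M_n\mid\text{first }t\text{ moves}]$, which has bounded increments and concentrating predictable variation), $(S_n-\mathbb E S_n)/\sqrt{\operatorname{Var}S_n}\Rightarrow\mathcal N(0,1)$. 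Feeding this into \eqref{eq:Mdecomp} with $N_{\mathrm{fin}}=O(\log n)$ gives the claim for $M_n$.

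\textbf{The hard part.} The genuinely difficult point is the bounded-range dependence in Step~2. Rules~(3g) and~(4e), and their base analogues, push mass \emph{five indices down}, so a reaction high in the spiral can, arbitrarily many moves later, deposit small $q_i$'s and revive low-index activity; a priori this couples the whole game. One must show---again through $\Phi$, since each downward transfer spends a definite amount and $\Phi$ begins at only $n$---that such long backward excursions are rare enough that correlations between far-apart blocks decay geometrically, and one must simultaneously tame their interaction with the irregular rules at indices $1$--$6$ and confirm that the unique possible Rule~(2a) does not wreck the approximate stationarity of the blocks. This is precisely the obstruction that leaves the analogous Gaussian conjecture for the original Zeckendorf Game open.
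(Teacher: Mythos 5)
The statement you are addressing is a conjecture, not a theorem: the paper offers no proof, only Monte Carlo evidence (10{,}000 simulated random games for several $n$, with moment comparisons against a best-fit Gaussian), so there is no argument of the authors' to compare yours against. Your Step 1 is essentially sound and parallels the paper's Lemma \ref{lem:mono} (they use $\sum\sqrt{i}$ over indices, you use $\sum\sqrt{q_i}$ over values; both give termination, the at-most-once use of Rule (2a), and $M_n=\Theta(n)$), and your bookkeeping identity $M_n=(n-N_{\mathrm{fin}})+S_n$ with $N_{\mathrm{fin}}=O(\log n)$ is correct since no move increases the number of summands and every merging move decreases it by exactly one.

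The genuine gap is that everything after that is asserted rather than proved, and the assertions are exactly the hard content of the conjecture. The ``Fekete/subadditivity'' coupling of a play on $\{q_1^{a+b}\}$ to near-independent plays on $\{q_1^{a}\}$ and $\{q_1^{b}\}$ has no justification: the game state is a single unordered multiset with no spatial separation of tokens, and the uniform-random-move dynamics does not factor, so the ``bounded-length interface correction'' is not established; without it you get neither $\mathbb{E}M_n\sim\mu n$ nor, more importantly, the lower bound $\operatorname{Var}(M_n)=\Theta(n)$, and without that variance bound even your reduction from $M_n$ to $S_n$ is incomplete (a priori $\operatorname{Var}(M_n)$ could be $O(1)$, in which case the $O(\log n)$ fluctuation of $N_{\mathrm{fin}}$ is not negligible). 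Likewise the block decomposition with $m$-dependence, the uniform exponential moments, the conditional variance lower bound, and the concentration of the predictable variation needed for the martingale CLT are all unproved claims; as you yourself note, Rules (3g)/(4e) transport mass five indices downward and can re-activate low-index play arbitrarily late, which is precisely the long-range dependence you would have to kill, and the same obstruction leaves Conjecture \ref{con:zeckgauss} for the original Zeckendorf game open. So what you have is a reasonable research program consistent with the paper's numerics, not a proof; to be a contribution it would need, at minimum, a proved regeneration or mixing statement for the random-play Markov chain and a proved linear-in-$n$ variance lower bound.
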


The next section will provide proofs for each of these theorems, starting with key lemmas and building up, as well as evidence to support our conjecture. Finally we will pose some questions we still hope to answer as well as possible future work.

%%%%%%%%%%%%%%%%%%%%%%%%%%%%
% THE FIBONACCI QUILT GAME %
%%%%%%%%%%%%%%%%%%%%%%%%%%%%
%%%%%%%%%%%%%%%%%%%%%%%%%%%%%%%%%%%%%%%%%%%%%%%%%%%%%%%%%%%%%%%%%%%%%%%%%%%%%%%%%%%%%%%%%%%%%%%%%%%%%%%%%%%%%%%%%%%%%%%%%%%%%%%%%%%%%%%%%%%%%%%%%%%%%%%%%%
%%%%%%%%%%%%%%%%%%%%%%%%%%%%%%%%%%%%%%%%%%%%%%%%%%%%%%%%%%%%%%%%%%%%%%%%%%%%%%%%%%%%%%%%%%%%%%%%%%%%%%%%%%%%%%%%%%%%%%%%%%%%%%%%%%%%%%%%%%%%%%%%%%%%%%%%%%
%%%%%%%%%%%%%%%%%%%%%%%%%%%%%%%%%%%%%%%%%%%%%%%%%%%%%%%%%%%%%%%%%%%%%%%%%%%%%%%%%%%%%%%%%%%%%%%%%%%%%%%%%%%%%%%%%%%%%%%%%%%%%%%%%%%%%%%%%%%%%%%%%%%%%%%%%%
%%%%%%%%%%%%%%%%%%%%%%%%%%%%%%%%%%%%%%%%%%%%%%%%%%%%%%%%%%%%%%%%%%%%%%%%%%%%%%%%%%%%%%%%%%%%%%%%%%%%%%%%%%%%%%%%%%%%%%%%%%%%%%%%%%%%%%%%%%%%%%%%%%%%%%%%%%

\section{The Fibonacci Quilt Game}\label{sec:fibquiltgame}

%%%%%%%%%%%%%%%%%%%%%%%%%%%%%%%%%%%%%%%%%%%%%%%%%%%%%%%%%%%%%%%%%%%%%%%%%
%%%%%%%%%%%%%%%%%%%%%%%%%%%%%%%%%%%%%%%%%%%%%%%%%%%%%%%%%%%%%%%%%%%%%%%%%
\subsection{The Game is Playable} The game as stated in Definition \ref{def:fibqgame} has two rules where the player can choose between two possible decompositions. Specifically, if there are two $q_4$, the player may choose to make the move $\{q_4^2 \rightarrow q_1 \land q_6\}$ or the move $\{q_4^2 \rightarrow q_3 \land q_5\}$, and if there are two $q_6$, the player may choose to make the move $\{q_6^2 \rightarrow q_2 \land q_8\}$ or the move $\{q_6^2 \rightarrow q_3 \land q_7\}$. To ensure that the given definition of the game encompasses all possible moves we first verify that for $n\geq7$, $\{q_i^2 \rightarrow q_{i-5} \land q_{i+2}\}$ is the only possible move.

\begin{proposition}\label{prop:split}
 Given $q_i^2$ for $i \geq 7$, the only legal way to decompose $q_i^2$ into two terms is $\{q_i^2 \rightarrow q_{i-5} \land q_{i+2}\}$.
\end{proposition}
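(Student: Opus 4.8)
The plan is to show that $2q_i$ (for $i \geq 7$) has essentially only one way to be written as a sum of two Fibonacci Quilt terms that is both an arithmetic identity and yields an FQ-legal pair. I would proceed in two stages: first classify all pairs $(a,b)$ of indices with $q_a + q_b = 2q_i$, then eliminate all but $(i-5, i+2)$ by the legality constraints of Definition~\ref{def:fqlegal}.

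For the first stage, I would use the recurrence $q_{n+1} = q_{n-1} + q_{n-2}$ (valid for $n \geq 5$) together with the sum identity $\sum_{j=1}^n q_j = q_{n+5} - 6$ from Theorem~\ref{thm:rrs}. Write $2q_i = q_i + q_i$. Applying the recurrence to one copy of $q_i$ (i.e., $q_i = q_{i-2} + q_{i-3}$ for $i \geq 7$) repeatedly, or alternatively using the Binet-type growth rate of the sequence, I would argue that any representation $q_a + q_b = 2q_i$ with $a \geq b$ forces $a \in \{i-1, i, i+1, i+2\}$ by size considerations: since $q_{i+3} > 2q_i$ for $i$ large enough (which I would check follows from the recurrences, as $q_{i+3} = q_i + q_{i-3} + q_{i-1}$... more precisely one shows $q_{i+2} < 2q_i < q_{i+3}$ eventually, so $a \leq i+2$; and $q_a \geq q_i$, i.e. $a \geq i$, is false in general — rather $2q_{i-1} < 2q_i$ and we need the larger term to be at least $q_i$ only if... ). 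The cleaner route: the larger summand $q_a$ satisfies $q_i \le q_a < q_{i+3}$ when the smaller is positive, but actually $q_a$ could be as small as roughly $q_i$; regardless, there are only finitely many candidate values of $a$ near $i$, and for each I would solve for $q_b = 2q_i - q_a$ and check whether $2q_i - q_a$ is a Fibonacci Quilt term, using the recurrence relations to reduce $2q_i - q_a$ to a single $q_b$. The case $a = i+2$ gives $q_b = 2q_i - q_{i+2} = 2q_i - (q_i + q_{i-3} + q_{i-1})$; using $q_i = q_{i-2} + q_{i-3}$ one simplifies to $q_{i-5}$, recovering the claimed identity. The remaining candidate values of $a$ (such as $a = i+1$, $a = i$, $a = i-1$) I would show either do not yield a term $q_b$ in the sequence, or yield a pair violating legality.

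For the second stage, I would invoke Definition~\ref{def:fqlegal}: the pair $(i-5, i+2)$ has index difference $7$, so $|({i+2}) - ({i-5})| = 7 \notin \{0,1,3,4\}$, and since $i - 5 \geq 2$ the exceptional $\{1,3\}$ constraint is irrelevant; hence this pair is FQ-legal. Then for any other arithmetically valid pair found in stage one, I would check its index difference lands in $\{1,3,4\}$ (or is $0$, or hits $\{1,3\}$), making it illegal — this is exactly the content of "the only legal way."

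The main obstacle I anticipate is stage one: carefully pinning down the finite list of candidate indices $a$ and ruling out spurious representations. Because the Fibonacci Quilt sequence lacks unique decomposition (as the paper emphasizes, $8 = 1+7 = 3+5$), one cannot simply wave at uniqueness; the argument must genuinely use the growth rate to bound $a$ and then the recurrences to evaluate $2q_i - q_a$ in each case. I would expect the bound to come from establishing an inequality like $q_{i+2} < 2q_i < q_{i+3}$ for $i \geq 7$ by induction on $i$ using $q_{n+1} = q_{n-1} + q_{n-2}$, which together with $q_a + q_b = 2q_i$ and $q_b \geq q_1 = 1$ confines $a$ to a short window; the finitely many resulting subcases are then routine.
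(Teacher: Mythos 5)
Your plan is essentially the paper's proof: bound the larger index $a$ by $2q_i < q_i + q_{i+1} = q_{i+3}$, use monotonicity of the sequence to force $a \in \{i, i+1, i+2\}$, recover $q_{i-5} \land q_{i+2}$ from the recurrences in the case $a = i+2$, and eliminate the remaining cases by legality plus arithmetic; the only (harmless) divergence is that for $a = i+1$ the paper first narrows $j$ using legality and then the identity $q_{n-1}+q_{n+1} = 2q_n + q_{n-9}$, while you would show $2q_i - q_{i+1} = q_i - q_{i-4}$ is not a quilt term at all (it lies strictly between $q_{i-2}$ and $q_{i-1}$), which also works. Do correct the two expansion slips — $q_{i+2} = q_i + q_{i-1}$ and $q_{i+3} = q_i + q_{i-1} + q_{i-2}$, not $q_i + q_{i-3} + q_{i-1}$ — though the conclusions you draw from them ($q_{i+3} > 2q_i$ and $2q_i - q_{i+2} = q_{i-5}$) are correct.
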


\begin{proof}
Suppose $2q_n = q_i + q_j$, and without loss of generality let $i>j$. We know $2q_n = q_n + q_n < q_n + q_{n+1} = q_{n+3}$, so $i<n+3$.

If $i = n$, then $j = n$ gives us an illegal decomposition.

If $i < n$, then $j < n$, but the Fibonacci Quilt sequence is strictly increasing, so $2q_n \not= q_i + q_j$ for $i,j < n$.

So $i = n + 1$ or $i = n + 2$. If $i = n + 2$ then we get the known solution $2q_n = q_{n-5} + q_{n+2}$. If $i=n+2$, $j=n-5$ is the unique solution to this addition. So we must verify there are no legal decompositions for $i = n + 1$. If $j = n - 3$, then $q_i + q_j = q_{n+2} < q_{n+2} + q_{n-5} = 2q_n$, so $ n - 2 \leq j \leq n$. $j \in \{n-2,n\}$ gives an illegal decomposition with $i = n+1$. So the only possible case is $j = n - 1$. But $2q_n < q_{n-9} + 2q_n = q_{n-1} + q_{n-4} + q_n = q_{n-1} + q_{n+1}$ by applying Rules (4e) and (2b). Thus there is no value of $j$ for $i = n + 1$ and $2q_n = q_{n-5} + q_{n+2}$ is the only legal decomposition using two terms.
\end{proof}

Now that we have established this we may prove Theorem \ref{thm:terminates}, starting with a few crucial lemmas. Our proof strategy is adapted from that used on the Zeckendorf Game \cite{BEFM1, BEFM2}.

\begin{lemma}\label{lem:15once}
    In one game of the Fibonacci Quilt Game, on some fixed integer $n$, Rule (2a), $\{q_1 \land q_5 \rightarrow q_2 \land q_4\}$, can be used at most once.
\end{lemma}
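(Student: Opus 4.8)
The plan is to show that Rule (2a) can be applied only when the list consists of exactly $\{q_1 \land q_5\}$ together with a collection of summands none of which forms an illegal pair with each other or with $q_1$ or $q_5$ (this is forced by the clause ``no other moves are possible''); and then to argue that once this move is made, such a configuration can never recur, because the total sum $n$ is fixed and every subsequent legal configuration in which $(q_1,q_5)$ is again the unique illegal pair would have to contain $q_1$ and $q_5$ as summands, forcing a strictly smaller ``remaining sum'' for the rest of the list each time it happens.

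More concretely, first I would record what a pre-(2a) state looks like: since no other move is available, the list has no two consecutive terms, no duplicated terms, no pair at distance $3$, and no pair at distance $4$ other than $(q_1,q_5)$, and moreover $q_1,q_3$ are not both present (if $q_3$ were present with $q_1$, Rule 5 would apply). In particular $q_3$ is absent, and the only illegal pair is $(q_1,q_5)$. After applying Rule (2a) we replace $q_1 \land q_5$ by $q_2 \land q_4$; the key point is that $q_2$ and $q_4$ are at distance $2$, which is legal, so this does not immediately create the same situation. The remaining task is to rule out Rule (2a) being reached a second time later in the same game. For that, I would track the quantity $S := \sum (\text{index contribution})$ or, more robustly, use the fact foreshadowed in the introduction that the game carries an ``almost monovariant'' which is a genuine monovariant except across Rule (2a): all other rules strictly decrease it. Since Rule (2a) is, by Proposition-style bookkeeping, the only rule that can increase (or fail to decrease) this quantity, a game that used (2a) twice would have to strictly decrease the monovariant on every move between the two applications and then be back at a value permitting (2a) — I would show the net effect of (2a) on the monovariant, combined with the strict decrease of all intervening moves, makes returning to a (2a)-eligible state impossible.

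The cleanest concrete route, and the one I would actually write, is: define $M$ to be the sum over all summands of their indices (with multiplicity), i.e. if the list is $\{q_{\ell_1}, \dots, q_{\ell_t}\}$ then $M = \sum_j \ell_j$. Check directly from Definition \ref{def:fibqgame} that every rule except (2a) strictly increases $M$ (each rule replaces terms by terms of larger total index — e.g. $q_i \land q_{i+1} \to q_{i+3}$ changes $M$ by $+2$; $q_i^2 \to q_{i-5}\land q_{i+2}$ changes it by $-3$, wait — so $M$ is \emph{not} monotone in the naive direction). Because of exactly this, the honest statement is that the relevant monovariant is more subtle, and the argument for Lemma \ref{lem:15once} should instead be combinatorial: a (2a)-state has $q_1$ present but $q_3$ absent and $q_5$ present; I would show that after (2a) is used the summand $q_4$ is present and cannot be removed without first pairing it with something, and trace forward that the configuration needed for a second (2a) — namely $q_1$ and $q_5$ both present as the \emph{only} conflict — requires rebuilding a $q_5$ and a lone $q_1$ simultaneously with everything else conflict-free, and count summand-sums to see there is not enough ``room'' in $n$ to do this twice. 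The main obstacle, and where I expect to spend the most care, is precisely pinning down this counting/monovariant argument so that it genuinely forbids a second application rather than merely making it look unlikely; getting the bookkeeping for Rule (2a)'s effect exactly right (it is the one move that behaves anomalously) is the crux.
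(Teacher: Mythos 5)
There is a genuine gap: the heart of the lemma is never proved, and you say so yourself (``where I expect to spend the most care''). Your characterization of a (2a)-eligible state is correct (only illegal pair $(q_1,q_5)$, so $q_2,q_3,q_4,q_6,q_8,q_9$ are all absent), but the lemma is the claim that such a state cannot arise a \emph{second} time, and none of the three mechanisms you float actually delivers this. The monovariant route fails: the paper's quantity (sum of square roots of indices) strictly decreases on every move \emph{except} (2a), which increases it slightly ($\sqrt{2}+2 > 1+\sqrt{5}$); a strict decrease on the intervening moves is perfectly compatible with reaching a second $\{q_1 \land q_5,\ \text{no other moves}\}$ state at a lower value, so no contradiction is obtained (and the paper indeed proves this lemma \emph{before} introducing that quantity, precisely because the monovariant cannot do this job). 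The ``not enough room in $n$'' counting also has no content as stated: in any (2a)-eligible state the summands other than $q_1,q_5$ total exactly $n-6$, the same every time, so there is no strictly shrinking quantity of the kind your first paragraph asserts (``forcing a strictly smaller remaining sum ... each time'').

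What the paper actually does, and what is missing from your sketch, is a finite forward case analysis. In a (2a)-eligible state the only summands of small index that can be present are $q_1$, $q_5$, and possibly $q_7$ (no rule pairs $q_7$ with $q_1$ or $q_5$); all other summands have index at least $10$, indeed at least $12$ after the forced follow-up moves when $q_7$ is present ($q_2 \land q_4 \land q_7 \rightarrow q_1 \land q_2 \land q_8 \rightarrow q_3 \land q_8$). One then tracks every possible continuation in a tree whose nodes are configurations of the form $\{q_{i-4} \land q_i \land \cdots\}$ or $\{q_{i+1} \land q_{i+2} \land \cdots\}$ with $i \geq 12$, showing each branch either terminates or returns to a node of the same shape with strictly fewer summands, and that the smallest index ever created along the way is $i-1 \geq 11$, which no rule can combine with anything else in play. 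Hence no new $q_5$ (or any small-index conflict) can ever be produced, so a second (2a)-eligible state never occurs. That explicit tree argument is the crux of the proof; without it your proposal is an outline of the statement rather than a proof of it.
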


This is a result of the restriction placed on Rule (2a), that it may only be used when there are no other possible moves. This is crucial in ensuring that the game terminates.

\begin{proof} The trivial game $\{1^2 \rightarrow 2\}$ shows that we do not necessarily use this rule.

Now we will consider a game where Rule (2a) has been applied once.

Let us begin before the rule has been applied. Recall that this rule may only be applied when there are no other possible moves to make. Thus, at the time the rule is applied our unordered list must contain $\{q_1 \land q_5\}$. Furthermore it cannot contain $q_2, q_3, q_4, q_6, q_8$ or $q_9$ since there is a rule in Definition \ref{def:fibqgame} that applies to each of these and $q_1$ or $q_5$ \emph{and} to use Rule (2a) no other moves may be possible. For example, if the list contained $q_3$, then the move $\{q_1 \land q_3 \rightarrow q_4\}$ could be applied, so Rule (2a) could not. A rule that could be applied before (2a) for each of these $q_i$ is shown in Figure \ref{fig:15rules}.

\begin{figure}[h!]
    \centering
    \includegraphics[width = 5in]{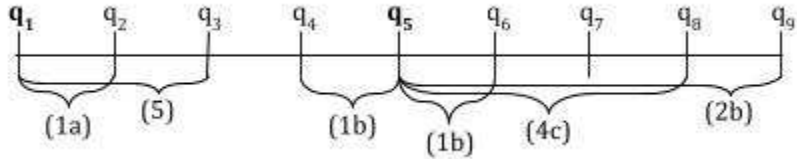}
    \caption{Rules from Definition \ref{def:fibqgame} which combine $q_1$ or $q_5$ with each of the other $q_i$.}
    \label{fig:15rules}
\end{figure}

The first term which could possibly be in the list, besides $q_1$ and $q_5$, is $q_7$, as there is no Rule to combine it with $q_1$ or $q_5$ in Figure \ref{fig:15rules}.

If $q_7 = 9$ is \emph{not} in the unordered list, then the game terminates after the rule is applied:
\[\{q_1 \land q_5 \land q_k \land \cdots \rightarrow q_2 \land q_4 \land q_k \land \cdots\},\]
where $q_k$ is the smallest possible next term, $k \geq 10$. There can be no possible moves within $\{q_k \land \cdots\}$, or else we would not have been able to apply this rule, so we are are done.

If $q_7 = 9$ is in the unordered list, then the next moves are:
\[\{q_1 \land q_5 \land q_7 \land q_\ell \land \cdots \rightarrow q_2 \land q_4 \land q_9 \land q_\ell \land \cdots \rightarrow q_1 \land q_2 \land q_8 \land q_\ell \land \cdots \rightarrow q_3 \land q_8 \land q_\ell \land \cdots\},\]
where $q_\ell$ is the smallest possible next term, $\ell \geq 12$. If $q_{12}$ is not in the list then the game is over.

If $q_{12}$ is present, we are in the situation where we have $q_{i-4} \land q_{i}$ followed by a legal decomposition. Having $q_{i}$ there are three possible next terms $q_{i+2}, q_{i+5},$ or $q_j$ for some $j > i+5$. The possible games for each of these are illustrated in the Figure \ref{fig:15tree}.

\begin{figure}[!h]
    \centering
    \includegraphics[width = 4in]{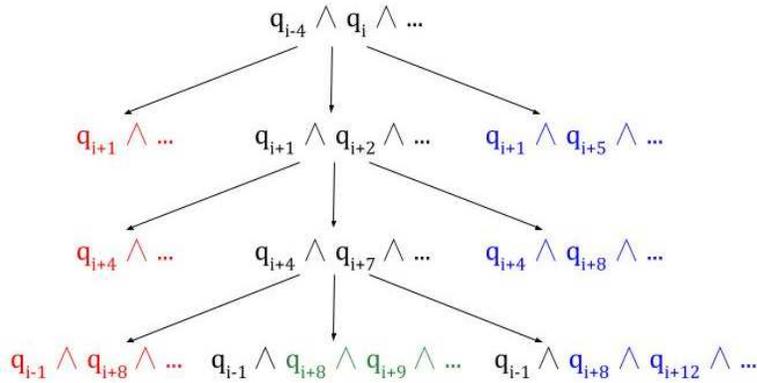}
    \caption{All possible moves, with terminating positions colored red, positions that return to the root of the tree in blue, and positions that return to $\{q_{i+1} \land q_{i+2} \land \cdots \}$ in green.     }
    \label{fig:15tree}
\end{figure}

If the next term is $q_j$ for some $j> i+5$ the game terminates immediately.

If the next term is $q_{i+5}$, then we reach $\{q_{i+1} \land q_{i+5} \land \cdots \}$. Let $k = i + 5$, then this can be rewritten as $\{q_{k-4} \land q_{k} \land \cdots \}$, and the possible games will follow the same possibilities as the root of our tree.

If the next term is $q_{i+2}$, then we reach $\{q_{i+1} \land q_{i+2} \land \cdots \}$. Again we have to consider the next possible term, there are three possibilities: $q_{i+7},q_{i+8},$ or $q_j$ for some $j > i+8$. Note that $q_{i+4}$ is not possible here although it is an acceptable distance from $q_{i+2}$ since we know $q_i$ was present and there could be no possible moves to begin with.

If the next terms are $q_{i+2}$ and $q_j$ for some $j > i+8$ then the game terminates immediately.

If the next terms are $q_{i+2}$ and $q_{i+8}$, then we reach $\{q_{i+4} \land q_{i+8} \land \cdots \}$. Let $l = i + 8$, then this can be rewritten as $\{q_{l-4} \land q_{l} \land \cdots \}$, and the possible games will follow the same possibilities as the root of our tree.

If the next terms are $q_{i+2}$ and $q_{i+7}$, then we must again consider the next possible term. If it is $q_j$ for some $j>i+12$ we reach $\{q_{i-1} \land q_{i+8} \land q_{j} \land \cdots\}$ and the game terminates. The other two possibilities are $q_{i+9}$ and $q_{i+12}$.

If the next terms are $q_{i+2}, q_{i+7}$, and $q_{i+12}$, then we reach $\{q_{i-1} \land q_{i+8} \land q_{i+12} \land \cdots \}$. Let $t = i + 12$, then this can be rewritten as $\{q_{t-4} \land q_{t} \land \cdots \}$, and the possible games will follow the same possibilities as the root of our tree.

If the next terms are $q_{i+2}, q_{i+7}$, and $q_{i+9}$, then we reach $\{q_{i-1} \land q_{i+8} \land q_{i+9} \land \cdots\}$. Let $s = i+7$, the this can be rewritten as $\{q_{s+1} \land q_{s+2} \land \cdots\}$, and the possible games will follow the same possibilities as $\{q_{i+1} \land q_{i+2} \land \cdots\}$.

Note in the last two cases we have an additional term $q_{i-1}$, however following the tree, the next smallest terms that could be created are $q_{i+11}$ and $q_{i+6}$ respectively, and there are no rules that combine $q_{i-1}$ with anything this large.

Since there are a finite number of terms in the unordered list to begin, and each time through the tree reduces this number by at least one, we know that we must terminate at some point. Throughout the tree, the smallest possible term we could create is $q_{i-1} = q_{11}$, which cannot be combined with any smaller terms, and thus we can never create another $q_5$, and the rule cannot be applied again.
\end{proof}

Knowing that we may only apply this rule at most once, we now must ensure that the game takes on only a finite number of moves before and potentially after this rule is applied. We do this by introducing a quantity which is almost a monovariant.

\begin{lemma}\label{lem:mono}
    The sum of the square roots of the indices of the $q_i$ in the unordered list on any given turn, besides Rule (2a) of $\{q_1 \land q_5 \rightarrow q_2 \land q_4\}$, is a strictly decreasing monovariant; however, Rule (2a) can be used at most once and thus this quantity is effectively a monovariant.
\end{lemma}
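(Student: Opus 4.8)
The plan is to verify the claimed monovariant $\Phi := \sum_i \sqrt{\ell_i}$ (sum over all terms $q_{\ell_i}$ currently in the unordered list, with multiplicity) strictly decreases under every move of Definition \ref{def:fibqgame} except Rule (2a), and to invoke Lemma \ref{lem:15once} for the exceptional rule. Since each move replaces either one or two specified terms by one or two specified terms, the whole lemma reduces to a finite check: for each rule, write down the net change $\Delta\Phi = (\text{new indices' contribution}) - (\text{old indices' contribution})$ and confirm $\Delta\Phi < 0$. I would organize the verification by grouping the rules according to the shape of the index change coming from the recurrences in Theorem \ref{thm:rrs}.

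First I would handle the three ``generic'' families, which are the only ones occurring infinitely often. Rule 1 (for $i \geq 2$): $\{q_i, q_{i+1}\} \to q_{i+3}$, so $\Delta\Phi = \sqrt{i+3} - \sqrt{i} - \sqrt{i+1} < 0$ since $\sqrt{i+3} < \sqrt{i} + \sqrt{i+1}$ for all $i \geq 1$ (square both sides: $i+3 < 2i+1 + 2\sqrt{i(i+1)}$, clear for $i\geq 1$). Rule (2b): $\{q_i, q_{i+4}\} \to q_{i+5}$, $\Delta\Phi = \sqrt{i+5} - \sqrt{i} - \sqrt{i+4} < 0$, even easier since $\sqrt{i+5} < \sqrt{i+4} < \sqrt{i} + \sqrt{i+4}$. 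Rule (3g): $\{q_i, q_{i+3}\} \to \{q_{i-5}, q_{i+4}\}$ for $i \geq 7$, $\Delta\Phi = \sqrt{i-5} + \sqrt{i+4} - \sqrt{i} - \sqrt{i+3}$; here I would note $\sqrt{i+4} - \sqrt{i+3} = \frac{1}{\sqrt{i+4}+\sqrt{i+3}}$ while $\sqrt{i} - \sqrt{i-5} = \frac{5}{\sqrt{i}+\sqrt{i-5}}$, and the latter exceeds the former for $i \geq 7$, giving $\Delta\Phi < 0$. Rule (3e/3f) of the form $\{q_i, q_{i+3}\}\to\{q_1, q_{i+4}\}$ only strengthens this (we replace $q_{i-5}$ by the smaller index $q_1$, and for the $i=4,5$ cases $q_{i-5}$ isn't even available). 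Rule (3g) is the tightest of these estimates, so I expect it to be the main obstacle — though it is still elementary, it requires the concavity/difference-of-square-roots bound rather than a one-line comparison.

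Next I would dispatch the base-case rules (1a), (2a excepted), (3a)--(3d,3f), (3 variants), (4), and all subcases of Rule 3 in the statement's numbering — these are finitely many, and I would simply tabulate $\Delta\Phi$ for each: e.g. $\{q_1,q_2\to q_3\}$ gives $\sqrt3 - 1 - \sqrt2 < 0$; $\{q_1^2 \to q_2\}$ gives $\sqrt2 - 2 < 0$; $\{q_2^2 \to q_4\}$ gives $2 - 2\sqrt2 < 0$; $\{q_3^2 \to q_2, q_4\}$ gives $\sqrt2 + 2 - 2\sqrt3 < 0$; $\{q_4^2 \to q_1, q_6\}$ gives $1 + \sqrt6 - 4 < 0$ and $\{q_4^2 \to q_3, q_5\}$ gives $\sqrt3 + \sqrt5 - 4 < 0$; $\{q_5^2 \to q_1, q_7\}$ gives $1 + \sqrt7 - 2\sqrt5 < 0$; $\{q_6^2 \to q_2, q_8\}$ gives $\sqrt2 + 2\sqrt2 - 2\sqrt6 < 0$ and $\{q_6^2 \to q_3, q_7\}$ gives $\sqrt3 + \sqrt7 - 2\sqrt6 < 0$; $\{q_1, q_3 \to q_4\}$ gives $2 - 1 - \sqrt3 < 0$; and similarly for $\{q_3, q_6 \to q_1, q_7\}$, $\{q_6, q_9\to q_2,q_{10}\}$, and the small-index distance-3 rules. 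All are negative; this step is routine arithmetic with square roots.

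Finally, for Rule (2a), $\{q_1, q_5 \to q_2, q_4\}$, we have $\Delta\Phi = \sqrt2 + 2 - 1 - \sqrt5 = 1 + \sqrt2 - \sqrt5 > 0$, so $\Phi$ is \emph{not} monotone here; this is exactly why the lemma only claims an ``almost'' monovariant. But Lemma \ref{lem:15once} guarantees Rule (2a) is applied at most once in any game, so $\Phi$ can increase at most once, by the fixed bounded amount $1 + \sqrt2 - \sqrt5$; strictly decreasing on all other moves and bounded increase on a single move still forces termination, and for the purpose of bounding game length $\Phi$ functions as a monovariant up to an additive constant. I would close by remarking that $\Phi$ is bounded below by $0$ (indices are positive), completing the argument that the game cannot run forever — which is precisely the input needed for Theorem \ref{thm:terminates}.
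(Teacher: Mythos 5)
Your proof takes essentially the same route as the paper: an exhaustive rule-by-rule verification that $\Delta\Phi<0$ for every move other than Rule (2a), with the exceptional rule handled by invoking Lemma \ref{lem:15once}; your added justification of the generic inequalities via the difference-of-square-roots bound, and the explicit remark that $\Delta\Phi = 1+\sqrt{2}-\sqrt{5}>0$ for Rule (2a), go slightly beyond the paper's bare tabulation but are the same argument. One small omission, caused by a numbering slip: what you call ``Rule (3g)'' is actually the distance-3 rule $\{q_i \land q_{i+3} \rightarrow q_{i-5} \land q_{i+4}\}$ (the paper's Rule (4e)), so the genuine Rule (3g), the doubling move $\{q_i^2 \rightarrow q_{i-5} \land q_{i+2}\}$ for $i \geq 7$, is never verified, and since it is an infinite family it is not absorbed by your ``finitely many base cases'' step. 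It succumbs to exactly the estimate you already used: $\sqrt{i+2}-\sqrt{i} = \tfrac{2}{\sqrt{i+2}+\sqrt{i}} < \tfrac{5}{\sqrt{i}+\sqrt{i-5}} = \sqrt{i}-\sqrt{i-5}$, so $\sqrt{i-5}+\sqrt{i+2}-2\sqrt{i}<0$; with that case added your argument is complete and matches the paper's.
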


\begin{proof}
When considering this monovariant, we must only consider the terms directly effected by the move, since all other terms will contribute the same to the sum before and after the move. We will show the monovariant in the same order as Definition \ref{def:fibqgame} for clarity. The contribution of the terms directly effected by the move is always smaller after the move is applied.
\begin{enumerate}
    \item \textbf{Combining Consecutive Terms:}
    \begin{enumerate}
        \item $\{q_1 \land q_2 \rightarrow q_3\}$: $\sqrt{3} - \sqrt{2} - 1 \ < \ 0$
        \item $i \geq$ 2, $\{q_i \ \land \ q_{i+1} \rightarrow q_{i+3}\}$: $\sqrt{i+3} - \sqrt{i+1} - \sqrt{i}  \ < \ 0$
    \end{enumerate}
    \item \textbf{Combining $q_i$ and $q_{i+4}$}
        \begin{enumerate}
            \item $\{q_1 \ \land \ q_5 \rightarrow q_2 \ \land \ q_4 \}$: this rule is not included in this lemma.
            \item $i \geq$ 2, $\{q_i \ \land \ q_{i+4} \rightarrow q_{i+5}\}$: $\sqrt{i+5} - \sqrt{i+4} - \sqrt{i} \ <  \ 0$
        \end{enumerate}
        \item \textbf{Combining $2q_i$}
        \begin{enumerate}
            \item $\{q_1^2 \rightarrow q_2\}$: $\sqrt{2} - 2 \ < \ 0$
            \item $\{q_2^2 \rightarrow q_4\}$: $2 - 2\sqrt{2} \ < \ 0$
            \item $\{q_3^2 \rightarrow q_2 \ \land \ q_4\}$: $2 + \sqrt{2} - 2\sqrt{3} \ < \ 0$
            \item $\{q_4^2 \rightarrow q_1 \ \land \ q_6\}$: $\sqrt{6} + 1 - 4 \ <  \ 0$

            $\{q_4^2 \rightarrow q_3 \ \land \ q_5\}$: $\sqrt{5} + \sqrt{3} - 4 \ < \ 0$
            \item $\{q_5^2 \rightarrow q_1 \ \land \ q_7\}$: $\sqrt{7} + 1 - 2\sqrt{5} \ < \ 0$
            \item $\{q_6^2 \rightarrow q_2 \ \land \ q_8\}$: $\sqrt{8} + \sqrt{2} - 2\sqrt{6} \  < \ 0$

            $\{q_6^2 \rightarrow q_3 \ \land \ q_7\}$: $\sqrt{7} + \sqrt{3} - 2\sqrt{6} \ < \ 0$
            \item if $i \geq$ 7, $\{q_i^2 \rightarrow q_{i-5} \ \land \ q_{i+2}\}$: $\sqrt{i+2} + \sqrt{i-5} - 2\sqrt{i} \ < \ 0$
        \end{enumerate}
        \item \textbf{Combining $q_i$ and $q_{i+3}$}
        \begin{enumerate}
            \item $i$ = 1,2, $\{q_i \ \land \ q_{i+3} \rightarrow q_{i+4}\}$: $\sqrt{i+4} - \sqrt{i+3} - \sqrt{i} \ < \ 0$
            \item $\{q_3 \ \land \ q_6 \rightarrow q_1 \ \land q_7\}$: $\sqrt{7} + 1 - \sqrt{6} - \sqrt{3} \ < \ 0$
            \item $i$ = 4,5, $\{q_i \ \land \ q_{i+3} \rightarrow q_1 \ \land \ q_{i+4}\}$: $\sqrt{i+4} + 1 - \sqrt{i+3} - \sqrt{i} \ < \ 0$
            \item $\{q_6 \ \land \ q_9 \rightarrow q_2 \ \land \ q_{10}\}$: $\sqrt{10} + \sqrt{2} - 3 - \sqrt{6} \ < \ 0$
            \item $i \geq$ 7, $\{q_i \ \land \ q_{i+3} \rightarrow q_{i-5} \ \land \ q_{i+4}\}$: $\sqrt{i+4} + \sqrt{i-5} - \sqrt{i+3} - \sqrt{i} \ < \ 0$
        \end{enumerate}
        \item $\{q_1 \ \land \ q_3 \rightarrow q_4\}$: $2 - 1 - \sqrt{3} \ < \ 0$
    \end{enumerate}

For the values of $i$ on which this rules apply each of these is negative, thus the sum of the square roots of the indices of the terms decreases on each move and is a monovariant.
\end{proof}

With these two lemmas we can now prove Theorem \ref{thm:terminates}.

\begin{proof}
From Lemma \ref{lem:15once} we know that the rule $\{q_1 \land q_5 \rightarrow q_2 \land q_4\}$ is used either once or not at all, thus we can consider these two cases. In the first case we consider two different sub-games: the game before applying this rule and the game after. For the second case we consider the whole game.

We can see that each of these games is finite using the monovariant from Lemma \ref{lem:mono}. At the beginning of the game the sum of the square roots of the indices is $\sqrt{n}$, and with each move this value is decreasing, this means that the sum cannot be the same for two different turns and thus there will be no repeat turns. Since on each turn the unordered list is essentially a partition of $n$, of which there are finitely many, the game must terminate in finitely many moves. Similarly for a game after applying $\{q_1 \land q_5 \rightarrow q_2 \land q_4\}$, we begin with some monovariant value less than $\sqrt{n}$ and the argument continues as before.

When the game terminates it must be at an FQ-legal decomposition of $n$, since there is a rule in Definition \ref{def:fibqgame} corresponding to each illegal distance in Definition \ref{def:fqlegal}. Thus if the decomposition was not FQ-legal, there would be a rule that could be applied and the game would not be over.
\end{proof}

Now that we know the game terminates, we want to make sure that it is interesting to play; that is, that the players have choices to make and either player could win.

\begin{lemma}\label{lem:difgames}
    Given any positive integer $n$ such that $n > 3$, there are at least two distinct sequences of moves $M =\{m_i\}$ where the application of each set of moves to the initial set, denoted $M(\{q_1\}^n)$, leads to  FQ-legal decomposition of $n$.
\end{lemma}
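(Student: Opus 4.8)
The plan is to exploit the fact that although the opening move of every game is forced, the \emph{second} move already offers a genuine choice, and then to invoke Theorem \ref{thm:terminates} to extend both branches into complete games. No estimates are needed; the only place the hypothesis $n>3$ enters is in guaranteeing that the bifurcation at move two actually exists.

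First I would record that from the initial list $\{q_1^n\}$ the only legal move is Rule (3a), $\{q_1^2\rightarrow q_2\}$: every term equals $q_1$, so Rules (1), (2), (4) (all of which require two terms of distinct indices) and Rule (5) (which requires a $q_3$) are inapplicable, while Rule (3a) applies as soon as $n\geq 2$. Hence every game on $n$ necessarily begins by passing to the position $P_1:=\{q_1^{\,n-2}\land q_2\}$.

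Next I would branch at $P_1$. Since $n>3$ we have $n-2\geq 2$, so $P_1$ contains at least two copies of $q_1$ together with one copy of $q_2$. Two legal moves of \emph{different types} are then available at $P_1$: applying Rule (3a) once more yields $A:=\{q_1^{\,n-4}\land q_2^2\}$, while applying Rule (1a), $\{q_1\land q_2\rightarrow q_3\}$, yields $B:=\{q_1^{\,n-3}\land q_3\}$. By Theorem \ref{thm:terminates}, starting from $A$ any maximal sequence of legal moves reaches, in finitely many steps, an FQ-legal decomposition; since every move preserves the total, it is a decomposition of $n$. The same holds for $B$. Fix one such completion in each case. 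Prepending the forced opening move $\{q_1^2\rightarrow q_2\}$ and the respective branching move, we obtain two sequences of moves $M_A$ and $M_B$, each carrying $\{q_1^n\}$ to an FQ-legal decomposition of $n$. The sequences agree in their first move but differ in their second (Rule (3a) versus Rule (1a)), hence $M_A\neq M_B$, which proves the lemma.

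The only real content is the bookkeeping in the second paragraph — checking that the opening move is genuinely forced, so that the desired pair of games cannot be produced by branching at move one and must instead be found at move two; this is precisely why one needs $n>3$, i.e.\ $n-2\geq 2$, so that Rule (3a) is still available at $P_1$ alongside Rule (1a). Everything after that is immediate from Theorem \ref{thm:terminates}. If one wished to strengthen the conclusion so that the two games terminate at \emph{different} FQ-legal decompositions (rather than merely being distinct move sequences), one could steer one branch toward a decomposition containing $q_1$ and the other toward one avoiding $q_1$, using the non-uniqueness of FQ-legal decompositions noted after Definition \ref{def:fqlegal} (e.g.\ $8=1+7=3+5$); but this refinement is not required for the statement as given.
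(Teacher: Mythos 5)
Your proof is correct and follows essentially the same route as the paper: both hinge on the bifurcation at the second move, where from $\{q_1^{\,n-2}\land q_2\}$ one may play either $\{q_1^2\rightarrow q_2\}$ or $\{q_1\land q_2\rightarrow q_3\}$, and then extend each branch to a complete game (the paper does this by writing out the two explicit games on $n=4$ and prepending them, while you invoke Theorem \ref{thm:terminates}, which is available at this point, to complete the branches). Your explicit observation that the opening move is forced and that $n>3$ guarantees Rule (3a) remains available at the second move is a clean justification of the same idea.
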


\begin{proof}
 To show this we must only show that there are two distinct games on $n = 4$, for $n > 4$ starting with these moves would create two distinct games. There are exactly two distinct games on $n = 4$, they are
\[M_1 = \{\{q_1^2 \rightarrow q_2\},\{q_1 \land q_2 \rightarrow q_3\}, \{q_1 \land q_3 \rightarrow q_4\}\} \]
\vspace{-5mm}
\[M_2 = \{\{q_1^2 \rightarrow q_2\}, \{q_1^2 \rightarrow q_2\}, \{q_2^2 \rightarrow q_4\}\}\]

 Thus there are distinct games for $n > 3$.
\end{proof}

For $n \leq 3$ there is only one unique game. For $n = 4$ there are two unique games, and for $n = 5$ there are four games; however all games have the same length. Games begin to vary in length at $n = 6$.

\begin{corollary}\label{cor:diflength}
    For all $n > 5$, there are at least two games with different numbers of moves.  Further, there is always a game with an odd number of moves and one with an even number of moves.
\end{corollary}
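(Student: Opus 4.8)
The plan is to exhibit, for each $n>5$, two explicit games of different lengths, and then to argue that one can always find both a game with an even number of moves and a game with an odd number of moves. For the first part, I would build on the shortest game guaranteed by Theorem~\ref{thm:shortest}: it reaches an FQ-legal decomposition of $n$ in exactly $n-L(n)$ moves, where $L(n)$ is the maximum number of terms in an FQ-legal decomposition. To get a strictly longer game I would ``waste'' moves: starting from $n$ ones, first play $\{q_1^2 \rightarrow q_2\}$ some number of times to create a $q_2$, then use the base rules to shuffle terms around in a way that eventually funnels back to $1$'s but cost extra moves. Concretely, a move like $\{q_2^2 \rightarrow q_4\}$ followed later by a path that re-splits $q_4$ via $\{q_4^2 \rightarrow q_1 \land q_6\}$ (when two copies of $q_4$ are available) re-introduces a $q_1$; the cleanest device is that the existence of two genuinely distinct games on small inputs (Lemma~\ref{lem:difgames} gives two on $n=4$) combined with the freedom of choice in Rules (3d) and (3f) lets us force a detour. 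I expect the simplest concrete construction: for $n=6$ show two games of lengths differing by one by hand, and for $n>6$ prepend the move $\{q_1^2\to q_2\}$-free opening that isolates a block of six $1$'s, play the two different $n=6$ continuations on that block, and play identically on the rest; the two resulting games then have lengths differing by the same amount. This reduces everything to the base case $n=6$, which is a finite check.

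For the parity statement I would use the following observation. The monovariant of Lemma~\ref{lem:mono} guarantees every game is finite, and the game ends at an FQ-legal decomposition; different terminal decompositions are allowed (the sequence lacks unique decomposition). If I can find one game ending at a decomposition $D_1$ and another ending at a decomposition $D_2$ with $L$-values of opposite parity, I am nearly done — but a cleaner route is to take a single game $G$ and show I can always modify it to change the move-count by exactly one. The key local move: whenever a position contains $q_4^2$, I may play either $\{q_4^2\to q_1\land q_6\}$ or $\{q_4^2\to q_3\land q_5\}$; one of these may open up an extra forced move down the line relative to the other. Rather than tracking that delicately, I would instead argue as follows. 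Run the shortest game, of length $n-L(n)$. Now produce a second game: at the very start, instead of proceeding optimally, detour through a sub-block of six $1$'s using the two distinct $n=6$ games of lengths $\ell$ and $\ell+1$ (established in the base case); splicing the longer $n=6$-subgame in place of the shorter one changes the total length by exactly $1$. Hence among the games I have constructed there are consecutive lengths $k$ and $k+1$, one of which is even and one odd.

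The step I expect to be the main obstacle is verifying the base case $n=6$ cleanly: I must enumerate enough of the game tree on six $1$'s to (a) exhibit two games whose lengths differ by exactly one (not merely differ), and (b) confirm that both a short and a long continuation can be executed on an isolated block of six $1$'s sitting inside a larger position without the surrounding terms interfering — i.e., that no cross-block move (a rule combining a term from the block with a term outside) is forced in a way that derails the splicing. The interference worry is real because Rules (1), (2), (3d/f), (4) all combine terms at specific index distances, so I would need to choose the surrounding configuration (e.g.\ start the ``rest'' of the game only after the six-block has been fully resolved to large terms $\ge q_{10}$, or equivalently process the block first) so that the two sub-games are genuinely independent. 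Once that independence is secured, additivity of move counts over independent blocks finishes both the ``different lengths'' and the ``both parities'' claims simultaneously.
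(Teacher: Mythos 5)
Your overall strategy is essentially the paper's: exhibit two explicit games of consecutive lengths on a small base block, then splice either one into a larger position and continue identically afterwards, so that the totals differ by one and both parities occur. However, as written there is a genuine gap, and it is exactly the ``interference'' point you flag but do not resolve. The only rule whose legality depends on the rest of the position is Rule (2a), $\{q_1 \land q_5 \rightarrow q_2 \land q_4\}$, which may be played only when \emph{no other move exists anywhere in the list}. The natural odd-length game on $n=6$ (the one the paper itself uses for the case $n=6$) is $\{q_1^2\rightarrow q_2\}$, $\{q_1\land q_2\rightarrow q_3\}$, $\{q_1\land q_3\rightarrow q_4\}$, $\{q_1\land q_4\rightarrow q_5\}$, $\{q_1\land q_5\rightarrow q_2\land q_4\}$, whose last move is Rule (2a); played on a block of six $1$'s inside a larger position this final move is illegal (extra $1$'s make $\{q_1^2\rightarrow q_2\}$ available), so the splice you describe breaks down precisely at the step you identified as the main obstacle, and you never carry out the finite base-case check that would reveal this. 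Note also that no elaborate ``isolation'' of the block is needed: players choose their moves freely, and apart from Rule (2a) no rule's applicability is affected by extra terms, so the whole interference question reduces to avoiding Rule (2a) in the spliced subgame.

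There are two ways to close the gap. The paper's route is to use $n=7$ as the splice base: it exhibits a $6$-move and a $7$-move game on seven $1$'s, neither using Rule (2a) and both terminating at the single term $q_6$, so for $n>7$ one plays either of them on seven of the $1$'s and appends any fixed finishing sequence $M'$ for $\{q_1^{n-7}\land q_6\}$ (which exists by Theorem \ref{thm:terminates}); the case $n=6$ is then done separately with the two games above. Alternatively, your $n=6$ reduction can be saved by choosing a different odd game: the four moves $\{q_1^2\rightarrow q_2\}$, $\{q_1\land q_2\rightarrow q_3\}$, $\{q_1^2\rightarrow q_2\}$, $\{q_1\land q_2\rightarrow q_3\}$ followed by $\{q_3^2\rightarrow q_2\land q_4\}$ uses no Rule (2a) and ends at $\{q_2\land q_4\}$, the same terminal block as the $4$-move even game, so both subgames remain legal inside any larger list and the splice is legitimate. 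Either fix turns your sketch into a complete proof; without one of them (and without the explicit base-case games), the argument is incomplete. Your opening suggestion of ``re-splitting'' a $q_4$ via $\{q_4^2\rightarrow q_1\land q_6\}$ is also not a move on a single $q_4$ and should be dropped.
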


\begin{proof}
There are two distinct games, one of odd length and one of even length for $n = 6$. A game of odd length on $n = 6$ is
 \[\{\{q_1^2 \rightarrow q_2\}, \{q_1 \land q_2 \rightarrow q_3\}, \{q_1 \land q_3 \rightarrow q_4\}, \{q_1 \land q_4 \rightarrow q_5\}, \{q_1 \land q_5 \rightarrow q_2 \land q_4\}\}.\]
A game of even length on $n = 6$ is
 \[\{\{q_1^2 \rightarrow q_2\}, \{q_1^2 \rightarrow q_2\}, \{q_1 \land q_2 \rightarrow q_3\}, \{q_1 \land q_3 \rightarrow q_ 4\}\}.\]

For $n \geq 7$ it is enough to show that there are two distinct games, one of odd length and one of even length for $n = 7$. For $n>7$ we know there is some $M'$ that takes $\{q_1^{n-7} \land q_6 \}$ to a FQ-legal decomposition of $n$. If the length of $M'$ is even, combine it with the even game on $n = 7$ to get an even length game, and the odd game on $n = 7$ to get an odd game, and similarly if the length of $M'$ is odd.

A game of odd length on $n = 7$ is
 \[\{\{q_1^2 \rightarrow q_2\}, \{q_1 \land q_2 \rightarrow q_3\},\{q_1^2 \rightarrow q_2\}, \{q_1 \land q_2 \rightarrow q_3\}, \{q_3^2 \rightarrow q_2 \land q_4\}, \{q_1 \land q_2 \rightarrow q_3\}, \{q_3 \land q_4 \rightarrow q_6\}\}.\]

A game of even length on $n = 7$ is
 \[\{\{q_1^2 \rightarrow q_2\}, \{q_1 \land q_2 \rightarrow q_3\}, \{q_1^2 \rightarrow q_2\}, \{q_1^2 \rightarrow q_2\}, \{q_2^2 \rightarrow q_4\}, \{q_3 \land q_4 \rightarrow q_6\}\}.\]

Thus there are game of even and odd length for all $n \geq 6$.
\end{proof}

In the next section we will explore the behavior of the game length more.

%%%%%%%%%%%%%%%%%%%%%%%%%%%%%%%%%%%%%%%%%%%%%%%%%%%%%%%%%%%%%%%%%%%%%%%%%
%%%%%%%%%%%%%%%%%%%%%%%%%%%%%%%%%%%%%%%%%%%%%%%%%%%%%%%%%%%%%%%%%%%%%%%%%
\subsection{Game Length}\label{sec:length}

For some $n$, FQ-legal decompositions are not unique. The smallest example of this is $n = 8 = 1 + 7 = 3 + 5$. From this we can define two values for $n$. Let $L(n)$ be the \textit{maximum} number of terms in an FQ-legal decomposition of $n$, and let $l(n)$ be the \textit{minimum} number of terms in an FQ-legal decomposition of $n$.
For $n = 8$, we see that $L(8) = l(8) = 2$, however they are not always equal. For example $50 = 49 + 1 = 2 + 4 + 16 + 28$, so $l(50) = 2$ but $L(50) = 4$. Theorem \ref{thm:shortest} says that the shortest possible game on $n$ is achieved in $n - L(n)$ moves.

\begin{proof}[Proof of Theorem \ref{thm:shortest}]
 Note that this is trivially true for $n = 1$. It takes $0 = 1 - 1$ moves to complete the game on 1.

Assume that the shortest game on $i$ for $1 \leq i \leq n - 1$ are achieved in $i - L(i)$ moves. Then consider the shortest possible game on $n$.

If $n$ is in the Fibonacci Quilt Sequence denote it $q_j$. One can quickly verify for $j < 5$ that the lower bound holds. For $j \geq 5$, $q_j = q_{j-2} + q_{j-3}$. To reach the right hand side it would take ($q_{j-2} - 1$) $+$ ($q_{j-3} - 1)$ $=$ $q_j - 2$ moves, using one additional move to combine $q_{j-2}, q_{j-3}$ gives us $q_j$ in $q_j - 1$ moves.

If $n$ is not in the Fibonacci Quilt Sequence then write it in an FQ-legal decomposition using the maximum possible number of terms. $n = q_{\ell_1} + q_{\ell_2} +  \cdots + q_{\ell_{L(n)}}$. To reach the right hand side it would take ($q_{\ell_1} - 1$) $+$ ($q_{\ell_2} - 1$) $ + \cdots +$ ($q_{\ell_{L(n)}} - 1)$ $=$ ($q_{\ell_1} + q_{\ell_2} + \cdots + q_{\ell_{L(n)}}$) - $L(n)$ $=$ $n - L(n)$ moves.

 To see why the game would not terminate in less moves note that every move can reduce the total number of terms in the unordered list by at most 1. Thus after $n - L(n) - 1$ moves we would still have at least $n - (n - L(n) - 1) = L(n) + 1$ terms, which cannot be an FQ-legal decomposition as $L(n)$ is the maximum.
\end{proof}

From this theorem we see that we must be able to play the game without using any of the rules which take two terms to two terms, since we must remove one term on each turn to reach the lower bound.

\begin{corollary}\label{cor:smartgame}
    It is possible, for any $n$, to play the Fibonacci Quilt Game without using Rules (2a), (3c-g), (4b-e).
\end{corollary}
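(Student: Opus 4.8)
The plan is to deduce this directly from Theorem~\ref{thm:shortest} by a conservation/counting argument on the number of summands. First I would classify every move in Definition~\ref{def:fibqgame} according to how it changes the number of terms in the unordered list. Rules (1a), (1b), (2b), (3a), (3b), (4a), and (5) each delete two terms and insert one, so they decrease the count by exactly $1$; every remaining rule, namely (2a), (3c)--(3g), and (4b)--(4e), deletes two terms and inserts two, so it leaves the count unchanged. The two rules with a choice of output, (3d) and (3f), are two-to-two in either variant, so this dichotomy is exhaustive. In particular, the set of ``count-decreasing'' moves is precisely the complement of the list of rules we wish to avoid.

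Next I would invoke Theorem~\ref{thm:shortest}: there is a game $G$ on $n$ that terminates in an FQ-legal decomposition after exactly $n - L(n)$ moves. The game begins with $n$ ones, so it starts with $n$ terms; since each move lowers the term count by at most $1$, after $n-L(n)$ moves the list has at least $n-(n-L(n)) = L(n)$ terms. On the other hand $G$ ends at an FQ-legal decomposition of $n$, which by the definition of $L(n)$ has at most $L(n)$ terms. Hence the terminal list has exactly $L(n)$ terms, and, because the total drop of $n - L(n)$ was achieved in $n - L(n)$ steps each lowering the count by at most one, every single move of $G$ must have lowered the term count by exactly $1$.

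Combining the two observations, $G$ uses only count-decreasing moves, i.e.\ only Rules (1a), (1b), (2b), (3a), (3b), (4a), and (5). This is exactly the assertion that $G$ avoids Rules (2a), (3c)--(3g), and (4b)--(4e), which proves the corollary.

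Since the argument is purely combinatorial and rests on an already-established theorem, there is no serious obstacle; the only thing to be careful about is the bookkeeping in the first step, namely making sure the classification of all sub-cases is complete (no ``allowed'' rule is secretly two-to-two, no ``forbidden'' rule is one-to-one) and that the complement really is the exact list named in the corollary. As an alternative I could skip the appeal to Theorem~\ref{thm:shortest} and instead exhibit an explicit length-$(n-L(n))$ game following the construction in that theorem's proof, building each term $q_{\ell_j}$ of a maximal FQ-legal decomposition of $n$ separately from $q_{\ell_j}$ ones, and then check directly that this construction only ever merges consecutive terms, two equal small terms ($q_1^2$ or $q_2^2$), a distance-$3$ pair with small index, or a distance-$4$ pair into a single term; but the counting argument above is cleaner and avoids re-verifying the construction.
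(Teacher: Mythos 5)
Your proposal is correct and matches the paper's reasoning: the paper derives the corollary from Theorem~\ref{thm:shortest} with exactly this observation, namely that a game of length $n-L(n)$ starting from $n$ terms and ending at a decomposition with at most $L(n)$ terms must remove one term per move, so it never uses the two-to-two rules (2a), (3c)--(3g), (4b)--(4e). Your explicit classification of which rules are two-to-one versus two-to-two is just a more careful write-up of the same argument, which the paper leaves implicit.
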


We have obtained this lower bound for many values of $n$, but an algorithm to reach the lower bound for all $n$ is still unknown.

For small values of $n$ it is clear that the lower bound will not be reached in a large number of possible games. To better understand the length of an average game we used Mathematica code to simulate completely random games, where every possible move on each turn was equally likely. We then looked at the distribution of random games as $n$ increased, leading us to Conjecture \ref{con:fqgauss}, that the distribution of these random games will approach a Gaussian curve as $n$ approaches infinity.

We ran 10,000 simulations of a random game, and plotted the distribution of game lengths. For small values of $n$, the Gaussian does not fit as well. Figure \ref{fig:fqgauss20} shows the distribution of random games for $n = 20$.

\begin{figure}[!h]
    \centering
    \includegraphics[width = 3.5in]{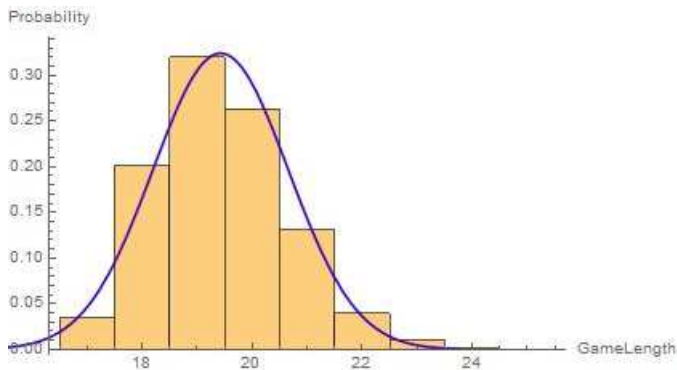}
    \caption{The distribution of game lengths of 10,000 random games on $n = 20$}
    \label{fig:fqgauss20}
\end{figure}

As we increase $n$ to 200 in Figure \ref{fig:fqgauss200}, we see that the Gaussian curve fits better.

\begin{figure}[!h]
    \centering
    \includegraphics[width = 3.5in]{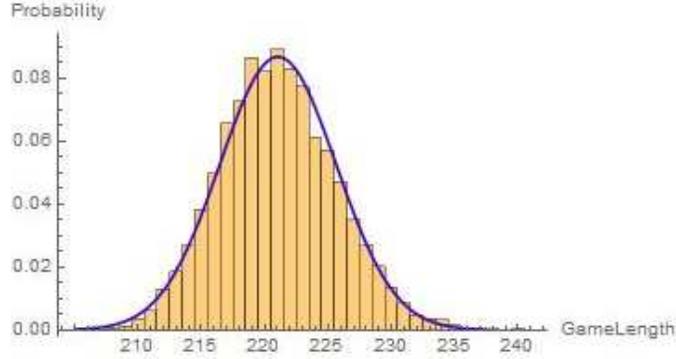}
    \caption{The distribution of game lengths of 10,000 random games on $n = 200$}
    \label{fig:fqgauss200}
\end{figure}

We also looked at the moments of these distributions compared to those with the same mean and standard deviation, with the differences of these values are shown in Figure \ref{fig:percents} (note that since we are using the best fit Gaussian, there is no error in the mean or second moment).

\begin{figure}[!h]
    \begin{tabular}{ c | c | c | c }
         $n$ & $2$\textsuperscript{nd} Moment Difference & $4$\textsuperscript{th} Moment Difference & $6$\textsuperscript{th} Moment Difference  \\ \hline
         20&0&0.044176&0.219217 \\
         60&0&0.009249&0.046575 \\
         200&0&0.000008& 0.004052
    \end{tabular}
    \caption{The percent difference between the moments of the distribution and the moments of the Gaussian curve with the same mean and standard deviation.}
    \label{fig:percents}
\end{figure}

Thus a Gaussian curve appears to fit the randomly simulated games well. From this we can also see that in a random game either player has an equal chance of winning, so the game is fair.

%%%%%%%%%%%%%%%%%%%%%%%%%%%%%%%%%%%%%%%%%%%%%%%%%%%%%%%%%%%%%%%%%%%%%%%%%%%%%%%%%%%%%%%%%%%%%%%%%%%%%%%%%%%%%%%%%%%%%%%%%%%%%%%%%%%%%%%%%%%%%%%%%%%%%%%%%%
%%%%%%%%%%%%%%%%%%%%%%%%%%%%%%%%%%%%%%%%%%%%%%%%%%%%%%%%%%%%%%%%%%%%%%%%%%%%%%%%%%%%%%%%%%%%%%%%%%%%%%%%%%%%%%%%%%%%%%%%%%%%%%%%%%%%%%%%%%%%%%%%%%%%%%%%%%
%%%%%%%%%%%%%%%%%%%%%%%%%%%%%%%%%%%%%%%%%%%%%%%%%%%%%%%%%%%%%%%%%%%%%%%%%%%%%%%%%%%%%%%%%%%%%%%%%%%%%%%%%%%%%%%%%%%%%%%%%%%%%%%%%%%%%%%%%%%%%%%%%%%%%%%%%%
%%%%%%%%%%%%%%%%%%%%%%%%%%%%%%%%%%%%%%%%%%%%%%%%%%%%%%%%%%%%%%%%%%%%%%%%%%%%%%%%%%%%%%%%%%%%%%%%%%%%%%%%%%%%%%%%%%%%%%%%%%%%%%%%%%%%%%%%%%%%%%%%%%%%%%%%%%

\section{Future work}

There are many questions about this game that can still be asked. It is known that if $n \neq 2$ then Player 2 has a winning strategy in the original Zeckendorf game. The proof techniques do not easily generalize to the Fibonacci Quilt Game due to the odd behavior of the quilt at its center, which necessitates a significantly larger set of strategies to investigate. Does Player 2 still have a winning strategy for the Fibonacci Quilt Game? If so, what is it? Note we do not know the answer to the second question for the original game; the proof that Player 2 has a winning strategy is non-constructive.

Other questions arise from bounds on game length. Is there one algorithm that reaches the lower bound for all values of $n$? Is there a reasonable upper bound on the length of a game? Simulations have never given a game of length close to or longer than $2n$, and numerical exploration of small $n$ (up to 120) suggest that the mean and maximum length grow linearly.

Another great question, asked by Dylan King at a presentation of this work, relates to $L(n)$ and $l(n)$. We give an example where $L(n)-l(n) = 0$, and another where $L(n) - l(n) = 2$, but can the distance between these two values grow arbitrarily large?

Lastly, like the Zeckendorf Game, this game has been constructed for two players, but one could also study how the behavior of this game changes if it was constructed to be played by more people at once. Who has a winning strategy (as a function of $n$ and the number of people)?

%%%%%%%%%%%%%%%%%%%%%%%%%%%%%%%%%%%%%%%%%%%%%%%%%%%%%%%%%%%%%%%%%%%%%%%%%%%%%%%%%%%%%%%%%%%%%%%%%%%%%%%%%%%%%%%%%%%%%%%%%%%%%%
%%%%%%%%%%%%%%%%%%%%%%%%%%%%%%%%%%%%%%%%%%%%%%%%%%%%%%%%%%%%%%%%%%%%%%%%%%%%%%%%%%%%%%%%%%%%%%%%%%%%%%%%%%%%%%%%%%%%%%%%%%%%%%

\ \\

\end{document}